\newcommand{\eome}{{\rm W\kern-.6em W}}
\newtheorem{de}{Definition}
\newtheorem{theo}{Theorem}
\newtheorem{proposition}{Proposition}
\newtheorem{corollary}{Corollary}
\newtheorem{lemma}{Lemma}
\newtheorem{remark}{Remark}
\newcommand{\Chi}{\mathfrak{X}}
\newcommand{\supeq}{\geqslant}
\newcommand{\infeq}{\leqslant}
\newcommand{\R}{\mathbb{R}}
\renewcommand{\S}{\mathbb{S}}
\newenvironment{proof}[1][]{\textbf{Proof #1:~}}{\hfill$\square$\\}
\renewcommand{\Chi}{\mathcal{X}}
\newcommand{\Nu}{\mathcal{V}}
\renewcommand\epsilon{\varepsilon}
\newcommand{\dnabla}{\nabla\!\!\!\nabla}
\title{Input / Output Stability of a Damped String Equation coupled with Ordinary Differential System\protect\thanks{This work is supported by the ANR project SCIDiS contract number 15-CE23-0014.}}
\author{}
\begin{document}

%\address[1]{{LAAS - CNRS}, {Universit\'e de Toulouse, CNRS, UPS}, {{France}}}
%\address[2]{{Department of Mechanical and Industrial Engineering},  {334  Snell  Engineering  Center,  360  Huntington  Avenue},  {Northeastern University}, Boston, MA 02115 {USA}}

%\corres{*Matthieu Barreau, \email{barreau@laas.fr}}

%\presentaddress{This is sample for present address text this is sample for present address text}

%\jnlcitation{\cname{%
%\author{M. Barreau}, 
%\author{F. Gouaisbaut}, 
%\author{A. Seuret}} (\cyear{2017}), 
%\ctitle{A regime analysis of Atlantic winter jet variability applied to evaluate HadGEM3-GC2}, \cjournal{Q.J.R. Meteorol. Soc.}, \cvol{2017;00:1--6}.
%}

\maketitle

Matthieu Barreau$^{1*}$, \quad Fr\'ed\'eric Gouaisbaut${}^1$, \quad Alexandre Seuret${}^1$, and Rifat Sipahi${}^2$.\\
${}^1$ {LAAS - CNRS}, Universit\'e de Toulouse, CNRS, UPS, France (\textit{barreau,fgouaisb,seuret@laas.fr})\\
${}^2$ Department of Mechanical and Industrial Engineering, 334  Snell  Engineering  Center,  360  Huntington  Avenue,  Northeastern University, Boston, MA 02115 {USA} (\textit{rifat@coe.neu.edu})

\abstract{The input/output stability of an interconnected system composed of an ordinary differential equation and a damped string equation is studied. Issued from the literature on time-delay systems, an exact stability result is firstly derived using pole locations. Then, based on the Small-Gain theorem and on the Quadratic Separation framework, some robust stability criteria are provided. The latter follows from a projection of the infinite dimensional system states onto an orthogonal basis of Legendre polynomials. Numerical examples comparing these results with the ones in the literature are presented along with demonstrations of the effectiveness of the developed robust stability criteria.}

\textit{Keywords:} String equation; Frequency approach; Robust stability; Coupled ODE/PDE; Quadratic Separation.

\section{Introduction}

In a number of complex systems, including cyberphysical systems, one particular interconnection that often arises includes an interaction between a finite and an infinite dimensional system, see for exemple, a flexible crane system \cite{HE2016146}, a drilling pipe \cite{bresch2014output}, temperature control \cite{tang2011state}, vibration in structures \cite{Wu20142787,7476820}, MEMS\cite{flores2014dynamics} among many others\cite{bastin2016stability}. In the presence of such interactions however, studying stability is a challenge since one cannot separately treat each system, but must consider them both in an ensemble\cite{Arcak2016,niculescu2001delay}. This paper is aimed at addressing this stability problem, specifically of an interconnection between an ordinary differential equation (ODE) and a partial differential equation (PDE). We consider here only the string equation, which falls into the one-dimension hyperbolic systems. Most of the work on the stability of this class of PDEs has been conducted considering a Lyapunov functional. The main techniques can be classified in two categories. Of the first kind is the backstepping methodology for boundary controlled PDEs. This approach has been utilized by many researchers, see a summary in\cite{krstic2009delay,krstic2011}. The other method relies on semi-group theory and uses an energy argument to conclude on stability. This method has also been widely studied, see, for example\cite{HE2016146,tucsnak2009observation,morgul1994dynamic,Morgül2002731}.

%These methods discuss the asymptotic/exponential stability of the coupled system.
Noticing that the PDEs related to transport or string can be treated as a time-delay system (TDS), another approach considered in the study of stability of such systems relies on their input/output stability\cite{gu2003stability}. The derivation of a transfer function and the study of the characteristic equation forms the core of this approach toward assessing its input/output stability. On the wave equation, further results can be found in\cite{4099496,RNC:RNC1611,Grabowski2001,7984225} considering an input/output map or also using the port-Hamiltonian approach\cite{van2014port,califano2017stability}. On the other hand, in the cited references, the focus is on the PDE system, while the study of a coupled ODE/PDE system requires additional developments.

To address the stability problem of the coupled ODE/PDE system at hand, here we first make use of the comparison between TDS and string equation to obtain a transfer function representing the interconnected dynamics. An exact stability criterion similar to the one developed in \cite{olgac2002exact} is next adopted for this transfer function based on the pole locations of the system characteristic equation on the imaginary axis of the complex plane. Even if this approach provides exact results for stability, robustness and synthesis problems must be separately dealt with. To this end, a complementary approach is to use robust stability analysis tools as for example in \cite{espitia2016event} which considers an event-trigger control. In the same direction, the Small-Gain theorem \cite{curtain1995introduction} and Quadratic Separation\cite{peaucelle2007quadratic, 6760001} provide, as we show, efficient frameworks to reveal the stability conditions for the interconnected ODE/PDE system in a parameter region of interest.

The outline of the paper is as follows. Section 2 states the problem and provides some explanations regarding its physical interpretation. Section 3 deals with the input/output stability of the coupled system whose transfer function is derived and properties are deduced. Based on this, an exact stability criterion is developed in Section 4. Section 5 is dedicated to two tools used in robust stability analysis. More particularly, the Small-Gain theorem provides a very conservative but simple stability test while Quadratic Separation is developed in two stages: a first and then an extended analysis using projections of the infinite dimensional states of the ODE/PDE system. Finally, Section 6 presents examples and comparisons with other techniques.

The main contributions of this paper are as follows. The formulation of the ODE/PDE interconnection in the form of a TDS enables one to obtain some fundamental properties of the ODE/PDE system stability, and furthermore an algorithm for an exact stability criterion can be constructed as a baseline. Moreover, here, we propose a simple robust stability test based on Small-Gain Theorem from which many properties of the coupled system are deduced. Finally, the reformulation of the interconnected system in the Quadratic Separation framework enables a refinement of the previous stability results thanks to the Bessel inequality and Legendre polynomials. This new formulation shows that a new robust stability criterion can be obtained, which needs less computational resources than the one developed in \cite{besselString}.
%The use of Legendre polynomials helps to refine the coupling between the ODE and the PDE while in the literature, this phenomenon is usually not taken into account.

\textbf{Notations:} The following notations are used: $\mathbb{R}^+ = [0, +\infty)$, $\mathbb{C}^+ = \left\{ z \in \mathbb{C} \ / \ \mathfrak{R}\text{e}(z) \supeq 0, z \neq 0 \right\}, L^2 = L^2([0, 1]; \mathbb{R}), {H}^n = \{ z \in L^2; \forall m \infeq n, \frac{\partial^m z}{\partial x^m} \in L^2 \}$ and $i$ is the imaginary number. $L^2$ is equipped with the norm $\|z\|^2 =  \int_0^1 \lvert z(x) \rvert^2  dx = \left<z,z\right>$.\\
%For $u \in H^1$, the notation $u_x$ stands for $\frac{\partial u}{\partial x}$.
For any square matrices $A$ and $B$, let $\text{diag}(A,B) = \left[ \begin{smallmatrix}A & 0\\ 0 & B \end{smallmatrix} \right]$. A symmetric positive definite matrix $P \in \S^n_+ \subset \R^{n \times n}$ is written as $P \succ 0$. For a matrix $A \in \mathbb{R}^{m \times p}$, then $A(1:N,:)$ (or, $A(:,1:N)$) is the sub-matrix of $A$ with the first $N$ lines (or, columns respectively). $A^{\perp}$ is the nullspace of $A$ and $*$ denotes the transconjugate. $0_{m,p}$ and $1_{m,p}$ are respectively the null and full of ones matrices with $m$ lines and $p$ rows and $0_{m} = 0_{m,m}$, $1_{m} = 1_{m,m}$. To keep the presentation concise, the following notations are used for partial derivatives of a function $u$ on an appropriate space: $u_t(x, t) = \frac{\partial u}{\partial t}(x, t),  u_{tt}(x, t) = \frac{\partial^2 u}{\partial t^2}(x, t), u_x(x, t) = \frac{\partial u}{\partial x}(x, t)$ and $u_{xx}(x, t) = \frac{\partial^2 u}{\partial x^2}(x, t)$.

\section{Problem Statement}

\subsection{Problem formulation}

The input/output stability of the following interconnected system is studied:
\begin{subequations}
	\begin{align}
		\dot{X}(t) &= AX(t) + B\left( u(1,t) + r(t) \right), & t \supeq 0, \label{eq:controller} \\
		u_{tt} (x,t) &= c^2 u_{xx}(x,t), & \!\!\!\!\!\!\!\!\!\!\!\! x \in [0, 1], t \supeq 0, \label{eq:wave} \\
		u(0,t) &= K X(t), & t \supeq 0, \label{eq:boundary1} \\
		u_x(1,t) &= -c_0 u_t(1,t), & t \supeq 0, \label{eq:boundary2} \\
		u(x, 0) &=u_0(x), & x \in [0, 1], \label{eq:initial1} \\
		u_t(x, 0) &= v_0(x), & x \in [0, 1], \label{eq:initial2} \\
		X(0) & = X_0, & \label{eq:initial3}
	\end{align}
	\label{eq:problem}
\end{subequations}
\!\!with the initial conditions $X_0 \in \mathbb{R}^n$ and $(u_0, v_0) \in {H}^2 \times H^1$ such that equations \eqref{eq:boundary1} and \eqref{eq:boundary2} are respected. Here, $(X_0, u_0, v_0)$ are compatibles with the boundary conditions and $r$ is the input function.

System~\eqref{eq:problem} represents the interconnection of a linear time invariant system with a string equation. Here, the state $u$ denotes the amplitude of the wave which belongs to a functional space and consequently is of infinite dimension. We assume that the state $u(x,t)$ belongs to $\mathbb{R}$ but the analysis can be extended without difficulty to $\mathbb{R}^m$, $m > 1$. The two subsystems are connected through a Dirichlet boundary condition, that is, the output $u(1,\cdot)$ and the input (eq. \eqref{eq:boundary1}) depend on the state $u$ at different positions but not on the derivative of $u$.\\
For the system to be well-posed, another boundary condition is also needed. In the literature, the choice is to use a boundary damping with equation \eqref{eq:boundary2}. Indeed, it has been shown in \cite{Lagnese1983163} for example that this condition ensures the stability of the wave equation itself if $c_0 > 0$ and the case $c_0 = 0$ removes the damping.

The wave equation operator associated with the above described boundary conditions is known to be diagonalizable (for more information on semi-group definitions and properties, the reader can refer to \cite{luo2012stability, tucsnak2009observation} and the references therein). Once diagonalized, it can be expressed as the composition of two transport equations, one going forward and another backward. Boundary condition \eqref{eq:boundary2} implies a reflection of the forward wave with a coefficient $\alpha = \frac{1 - c c_0}{1 + c c_0}$ as explained in \cite{louw2012forced}. Enforcing $c_0 > 0$ implies $|\alpha| < 1$ and consequently the energy of the wave is decreasing. These details will be the key in developing our results later in the paper.

It is critical to note that the string equation considered here behaves like a communication channel with a string dynamics. Note that most of the systems with a string equation however consider a collocated control\cite{morgul1994dynamic,6651788} which then yields a very different problem in nature compared to the one in the proposed interconnection. The purpose of this paper is to study the input/output stability of this interconnected system, where the definition of input/output stability for infinite dimensional systems follows from Definition 9.1.1 of \cite{curtain1995introduction} as:
\begin{de} System \eqref{eq:problem} is said to be \textbf{input/output stable} if for all energy bounded input $r$, the energy of the output $Y = KX$ is also bounded. \end{de}
%Using classical approaches in time-delay systems, we get two main classes of input/output stability results: an exact criteria obtained via pole location and a robust analysis with the small-gain theorem and quadratic separation.

\subsection{Existence and Uniqueness of the Solution}

Before going further, the existence of a solution to system \eqref{eq:problem} with $r = 0$ is proven. To do so, a step by step procedure is proposed. To ease the reading, the following sets are defined:
\begin{equation}
	\begin{array}{l}
		\mathbb{I}_n = [n c^{-1}, (n+1) c^{-1}), \quad n \in \mathbb{N}, \\
		\mathbb{H} = \left\{ (X, u, v) \in \mathbb{R}^n \times H^2 \times H^1 \right\}, \\
		\mathbb{D} = \left\{ (X, u, v) \in \mathbb{H} \text{ s.t. } u(0) = KX, u_x(1) = -c_0 v(1) \right\},
	\end{array}
\end{equation}
and using the same technique as in \cite{louw2012forced,evans2010partial} for $(X_0, u_0, v_0) \in \mathbb{D}$, a solution for $t \in\mathbb{I}_0$ is given by:
\begin{equation} \label{eq:u1tau}
	u(1,t) = u_0(1-ct) + \frac{\alpha}{2} \left( u_0(1-tc) + u_0(1) \right) + \frac{1+\alpha}{2} \!\! \int_{1-ct}^1\!\! v_0(s)ds.
\end{equation}

On $\mathbb{I}_0$, we notice that $u(1, \cdot) \in H^2(\mathbb{I}_0)$ depends only on the initial conditions. Then, $X$ on $I_0$ is the solution of the differential equation \eqref{eq:controller}, with initial condition $X(0) = X_0$. It is well known that this solution reads:
\begin{equation}
	\forall t \in \mathbb{I}_0, \quad X(t) = e^{At} X_0 + \int_0^t e^{A(t-s)} B u(1,s) ds.
\end{equation}

Then, $X \in C^1(\mathbb{I}_0)$  and consequently, $u(0,\cdot)$ has the same regularity property. On the domain $\Gamma_0 = \mathbb{I}_0 \times [0, 1]$, with the boundary conditions defined previously and using the formulas of \cite{louw2012forced}, Equation \eqref{eq:wave} has a unique solution $(x,t) \in \Gamma_0 \mapsto u(x,t) \in H^2$ and for $t \in \mathbb{I}_0$, $u_t(\cdot, t) \in H^1$. With all these considerations, it is possible to find $(X(c^{-1}), u(\cdot, c^{-1}), u_t(\cdot, c^{-1})) \in \mathbb{D}$ and then repeat the same procedure for $\mathbb{I}_1$ leading to the existence of a solution. We then get $(X, u, u_t) \in L^2( 0, +\infty, \mathbb{H})$. Moreover, $u$, $u_t$, $u_x$, $u_{tt}$ and $u_{xx}$ are $L^2$ on each compact set of $\mathbb{R}^+$. As the system is linear, energy consideration leads to the uniqueness property. If $r$ is not null, then we can calculate the steady-state $(X_f, u_f)$ and then the previous analysis holds for $(X-X_f, u-u_f)$.

\begin{remark} The set of solutions $\mathbb{H}$ is smaller than the one in \cite{besselString}. The main difference comes from considering the strong solution here while the weak solution will belong to the same space as in \cite{besselString}.\end{remark}

\section{Input / Output Analysis}

One possible way to study the input/output stability of system \eqref{eq:problem} is to study its characteristic equation, which can be obtained by applying the Laplace transform. This standard approach can be found in \cite{doetsch2012introduction, morgul1998stabilization}.

\subsection{Laplace transform of the wave equation}

%Considering a input/output approach, a frequency analysis can be made. To do so, we use the Laplace transform of system \eqref{eq:problem}.
The notion of Laplace transform extends easily from its traditional definition for finite dimensional systems as noted in \cite{doetsch2012introduction}. For this, the variables need to be $L^2$ on each compact set of $\mathbb{R}^+$, which is the case in \eqref{eq:problem} as discussed earlier.
%
%In this article, a solution $(X, u, u_t)$ to system \eqref{eq:problem} belongs to $C(0, +\infty, \mathcal{H})$ with $\mathcal{H} = \mathbb{R}^n \times H^1 \times L^2$. Before using the Laplace transform, as noticed in \cite{doetsch2012introduction}, the four variables $X, U, U_x$ and $U_{xx}$ need to be $L^1_{\text{loc}}(0, +\infty)$ with respect to time, which is ensured here.
%
Since we are not concerned with the transient response of the dynamics, we assume the initial conditions to be zero, i.e., $u^0 = v^0 = 0$. In Laplace domain, equation \eqref{eq:wave} can then be transformed into:
\begin{equation}
	\forall x \in [0, 1], \quad \quad s^2 U_{xx}(x,s) = c^2 U(x,s) ,
\end{equation}
with $s \in \mathbb{C}$ is the Laplace variable and $U(x, \cdot)$ is the Laplace transform of $u(x, \cdot)$. The solution defined on $x \in [0, 1]$ is given by:
\begin{equation}
	U(x,s) = C_1(s) e^{\frac{s}{c} x} + C_2(s) e^{-\frac{s}{c} x},
\end{equation}
where $C_1$ and $C_2$ are space-independent transfer functions to be determined using the boundary conditions.
%The first condition \eqref{eq:boundary2} in Laplace domain is:
%\begin{equation}
%	C_1(s) = C_2(s) e^{-2 \frac{s}{c}} - c c_0 U(1,s) e^{-\frac{s}{c}},
%\end{equation}
%or more simply:
%\begin{equation}
%	C_1(s) = \frac{1 - c c_0}{1 + c c_0} C_2(s) e^{-2 \frac{s}{c}}.
%\end{equation}
%
%$U(0,s)$ is the input of the PDE and $U(1,s)$ is the output, we get with boundary condition \eqref{eq:boundary1}:
%\begin{equation}
%	C_2(s) = \frac{1}{1 + \frac{1 - c c_0}{1 + c c_0} e^{-2\frac{s}{c}}} U(0,s).
%\end{equation}
Once calculated using equations \eqref{eq:boundary1} and \eqref{eq:boundary2}, the transfer function for the string equation is obtained as:
\begin{equation} \label{eq:ux}
	\mathcal{W}(x, s) = \frac{U(x,s)}{U(0,s)} = \frac{e^{-\frac{s}{c}x} + \alpha e^{\frac{s}{c}(x-2)}}{1 + \alpha e^{-2\frac{s}{c}}},
\end{equation}
with $\alpha = \frac{1-c c_0}{1 + c c_0}$ and $x \in [0, 1]$.
%\begin{equation}
%	\frac{U(x,s)}{U(0,s)} = \frac{\text{ch} \left( \frac{s}{c} (1-x)  \right) + c c_0 \text{sh} \left( \frac{s}{c} (1-x) \right)}{\text{ch} %\left( \frac{s}{c} \right) + c c_0 \text{sh} \left( \frac{s}{c} \right)}.
%\end{equation}
The transfer function from $U(0,s)$ to $U(1,s)$ then reads:
\begin{equation} \label{eq:W}
	\mathcal{W}(1, s) = \mathcal{W}(s) = \frac{U(1,s)}{U(0,s)} = \frac{(1+\alpha)e^{-s /c}}{1 + \alpha e^{-2s/c}} = \frac{2e^{-s / c}}{1 + c c_0 + (1 - c c_0) e^{-2 s / c}}.
\end{equation}
Notice that there are infinitely many poles of $\mathcal{W}(x,s)$. These poles are independent of $x \in [0, 1]$ and their real part $\frac{c}{2} \log \left\lvert \alpha \right\rvert$ is strictly negative if $c c_0 \neq 1$ and $c_0 > 0$.
%\begin{equation}
%	\forall k \in \mathbb{Z}, s_k = \left\{
%		\begin{array}{ll}
%			\frac{c}{2} \log \left| \frac{1 - c c_0}{1 + c c_0} \right| + i c k \pi , & \text{if } c c_0 > 1, \\
%			- \infty, & \text{if }  c c_0 = 1,\\
%			\frac{c}{2} \log \left| \frac{1 - c c_0}{1 + c c_0} \right| + i\frac{c(1+2k) \pi}{2}, & \text{if } c c_0 < 1.
%		\end{array}
%		\right.
%\end{equation}

%Another transfer function, useful in the sequel, from $U(0,s)$ to $U(1,s) - U(0,s)$ can be also derived:
%\begin{equation} \label{eq:G2}
%	G(s) = \frac{U(1,s) - U(0,s)}{U(0,s)} = \frac{1 - \text{ch} \left( \frac{s}{c} \right) - c c_0 \text{sh} \left( \frac{s}{c} \right) }%{\text{ch} \left( \frac{s}{c} \right) + c c_0 \text{sh} \left( \frac{s}{c} \right)}.
%\end{equation}
\begin{remark} When $c_0 = 0$, $\mathcal W$ has infinitively many poles on the imaginary axis. Then Corollary~ 9.1.4 from \cite{curtain1995introduction} applies and there does not exist any finite-dimensional controller which exponentially stabilizes the wave equation. That is why only the case $c_0 > 0$ is considered.
\end{remark}

\subsection{Transfer function for the coupled system}

\begin{figure}
	\centering
	\includegraphics[width=7cm]{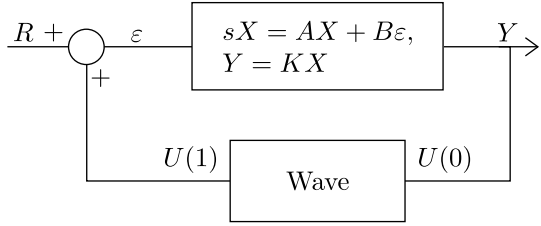}
	\caption{Block diagram for the system described by equation \eqref{eq:problem}.}
	\label{fig:block1}
\end{figure}

Considering the block diagram in Figure \ref{fig:block1}, the transfer function of the finite dimensional system is:
\begin{equation} \label{eq:H}
	\mathcal H(s) = \frac{Y(s)}{U(1,s) + R(s)} = K(sI - A)^{-1}B = \frac{N(s)}{D(s)} = \frac{l_{n-1} s^{n-1} + l_{n-2} s^{n-2} + \dots + l_0}{m_{n} s^n + m_{n-1} s^{n-1} + \dots + m_0},
\end{equation}
where $N$ and $D$ are real polynomials in $s$ with $m_n \neq 0$ and $R$ is the Laplace transform of the reference signal $r$ in \eqref{eq:problem}. \\
Using transfer function $\mathcal W$ developed previously, we then obtain the following closed-loop system transfer function:
\begin{equation} \label{eq:F}
	\mathcal F(s) = \frac{Y(s)}{R(s)} = \frac{\mathcal H(s)}{1- \mathcal H(s) \mathcal W(s)} = \frac{N(s) \left( 1+ \alpha e^{-2 s / c} \right)}{(1+c c_0) c_{eq}(s, c)},
\end{equation}
with
\begin{multline} \label{eq:CE}
	c_{eq}(s, c) = \left( 1+ c c_0 + (1 - cc_0) e^{-2s / c} \right) D(s) - 2 N(s) e^{-s / c} \\
	= m_n \left( 1+ c c_0 + (1 - cc_0) e^{-2s / c} \right) e^{-2s / c} s^n + \sum_{k=0}^{n-1}  \left( \left( 1+ c c_0 + (1 - cc_0) e^{-2s / c} \right) m_{k} - 2 l_k \right) s^k  .
\end{multline}

The stability of the dynamics is determined by the location of the zeros of \eqref{eq:CE} on the complex plane as we discuss in detail in the following subsection.

\subsection{Input / Output stability}
A time-delay system is of \emph{neutral type} when its highest order derivative is affected by a delay term $\tau$. Given a characteristic equation, this feature appears as the highest power of the Laplace variable $s$ multiplying the time delay operator $e^{-\tau s}$. Similarly, in state-space representation, neutral property is easily detected by the presence of the derivative of the state term being affected by delays. For a complete treatment of such systems, see details on neutral functional differential equations (NFDE) in\cite{niculescu2001delay,kolmanovskii2013introduction}.

Inspecting the corresponding transfer function $\mathcal F$ or the characteristic equation \eqref{eq:CE} of system \eqref{eq:problem}, it is easy to see that this system exhibits the neutral type property. That is, from an input/output approach, system \eqref{eq:problem} is classified as a neutral system. Moreover, we can re-write system \eqref{eq:problem} in the standard NFDE form by collecting the derivative of the states on the left hand side:
\begin{equation} \label{eq:ss}
	\dot{X}(t) + \alpha \dot{X}(t - 2 c^{-1}) = A X(t) + \frac{2}{1 + c c_0} BK X(t - c^{-1})  + \alpha AX(t - 2 c^{-1})  + Br(t) +  \alpha B r(t-c^{-1}),
\end{equation}
where from \cite{hale1977theory,olgac2004practical} the difference operator is defined as 
\[
	D(X)(t) = X(t) + \alpha X(t-2c^{-1}),
\]
for all $t \in \R^+$.

\begin{remark} One can see that for $c c_0 = 1$, we have $\alpha = 0$ then system \eqref{eq:problem} is no longer a neutral type system. It is of \emph{retarded type} \cite{bellman1963differential}. This distinction is critical when studying the stability of TDS. It will also help build various example cases and enable a comparison with the stability tests already available for TDS. \end{remark}

When it comes to studying the stability of NFDEs, contrary to functional differential equation of retarded type, inspecting the pole locations of the corresponding characteristic equation is not sufficient to guarantee input/output stability. This is due to the well-known small-delay phenomenon, also known as small $\tau$-stabilizability. When an NFDE is small $\tau$-stabilizable, then this guarantees that the poles of the NFDE behave in a continuum as the delay value changes from $0$ to $0^+$. In other words, the stability properties of the NFDE are preserved when the delay is infinitesimally increased past zero. Whenever this property holds, then studying the pole locations of an NFDE allows one to conclude on the stability of an NFDE (see \cite{bellman1963differential,opac-b1084291} for more information).

\begin{de} An NFDE system is said to be \textbf{small $\tau$-stabilizable} if the difference operator $D(X)(t)$ is stable. \end{de}

This definition comes from \cite{hale1977theory,olgac2004practical}. According to Theorem 12.5.1 and Corollary~12.5.1 from \cite{hale1977theory}, a necessary and sufficient condition for the difference operator to be stable is that $|\alpha|$ is strictly less than $1$.
%The $\tau$-stability of system \eqref{eq:problem} is then direct and we get the following proposition.

\begin{proposition} \label{lem:tauStabilizable} System \eqref{eq:problem} is neutral and small $\tau$-stabilizable if and only if $c_0 > 0$ and $c c_0 \neq 1$. \end{proposition}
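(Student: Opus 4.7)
The plan is to reduce both conditions in the proposition to inequalities on the reflection coefficient $\alpha = \frac{1-cc_0}{1+cc_0}$ and then translate back to inequalities on $c_0$ and $cc_0$.

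First I would treat the neutral character. The NFDE form \eqref{eq:ss} has highest-order derivative term $\dot X(t) + \alpha \dot X(t-2c^{-1})$, so by the definition recalled just before the proposition, the system is of neutral type exactly when the delayed derivative actually appears, i.e.\ when $\alpha \neq 0$. A direct calculation gives $\alpha = 0 \iff cc_0 = 1$, so the neutral property is equivalent to $cc_0 \neq 1$. (The degenerate case $cc_0 = -1$ is already excluded since it would make $\alpha$ itself undefined and also make $\mathcal W$ in \eqref{eq:W} ill-defined.)

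Next I would handle small $\tau$-stabilizability. By the definition stated just before the proposition, this amounts to stability of the difference operator $D(X)(t) = X(t) + \alpha X(t-2c^{-1})$. The paper already invokes Theorem~12.5.1 and Corollary~12.5.1 of \cite{hale1977theory}, which tell us that $D$ is stable if and only if $|\alpha| < 1$. The condition $|\alpha| < 1$ reads $(1-cc_0)^2 < (1+cc_0)^2$, which simplifies to $cc_0 > 0$; since the wave speed $c$ is positive by construction, this is equivalent to $c_0 > 0$.

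Combining the two equivalences, the system is simultaneously neutral and small $\tau$-stabilizable if and only if $c_0 > 0$ and $cc_0 \neq 1$, which is the claim. There is no real obstacle here: the only point that requires a sentence of care is to explicitly separate the two conditions ($\alpha \neq 0$ versus $|\alpha|<1$) and to note that the convention $c>0$ is what turns $cc_0 > 0$ into the cleaner inequality $c_0 > 0$ appearing in the statement.
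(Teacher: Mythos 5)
Your proposal is correct and follows essentially the same route as the paper, which does not even write out a separate proof: the neutrality condition $\alpha\neq 0 \iff cc_0\neq 1$ is exactly the paper's Remark preceding the proposition, and the stabilizability condition is the cited equivalence from Theorem~12.5.1 and Corollary~12.5.1 of Hale that the difference operator is stable iff $|\alpha|<1$, which with $c>0$ reduces to $c_0>0$. Your explicit algebra $(1-cc_0)^2<(1+cc_0)^2 \iff cc_0>0$ just fills in the one-line computation the paper leaves implicit.
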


Finally, since the system is small $\tau$-stabilizable under a given condition, the input/output stability can be discussed.
%The following corollary summarizes all this subsection.
\begin{corollary} If $(X_0, u_0, v_0) \in \mathbb{D}$ and all the poles of $\mathcal F$ as defined in equation \eqref{eq:F} are on the left half of the complex plane, then system \eqref{eq:problem} is input/output stable. The equilibrium points $(X_e, u_e, v_e) \in \mathbb{H}$ are such that $(A+BK)X_e = 0$, $u_e = K X_e$ and $v_e = 0$. \\
Moreover, if the system $\Sigma(A, B, K)$ is stabilizable and detectable such that $A+BK$ is not singular, then system \eqref{eq:problem} is asymptotically stable.
\end{corollary}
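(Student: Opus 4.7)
The plan is to dispatch the three assertions in order, using Proposition~\ref{lem:tauStabilizable} as the key lever.

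For the first assertion, since $c_0>0$ and $cc_0\neq 1$ make system~\eqref{eq:problem} a small $\tau$-stabilizable neutral system with $|\alpha|<1$, classical NFDE theory guarantees that, whenever the characteristic roots all lie in the open left half plane, they are in fact uniformly bounded away from the imaginary axis. Thus $\mathcal{F}$ is a proper, bounded holomorphic transfer function on a strip containing $\{s\,:\,\mathfrak{R}\text{e}(s)\supeq 0\}$. A standard Paley-Wiener argument then yields $r\in L^2(\mathbb{R}^+)\Rightarrow Y=KX\in L^2(\mathbb{R}^+)$ with a linear norm bound, which is precisely the input/output stability required in Definition~1.

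For the equilibrium characterization, I set all time derivatives and $r$ to zero in~\eqref{eq:problem}. Equation~\eqref{eq:wave} reduces to $u_{xx}=0$, so $u_e(x)=a+bx$. At equilibrium $v_e=u_t\equiv 0$, so \eqref{eq:boundary2} forces $u_x(1)=0$, hence $b=0$ and $u_e$ is constant. Condition \eqref{eq:boundary1} then yields $u_e=KX_e$, while \eqref{eq:controller} collapses to $(A+BK)X_e=0$. Finally $v_e=0$ by construction.

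For asymptotic stability, non-singularity of $A+BK$ reduces the equilibrium set to $\{(0,0,0)\}$. Stabilizability and detectability of $\Sigma(A,B,K)$ prevent any unstable mode of $(sI-A)^{-1}$ from being cancelled between the numerator and denominator of $\mathcal{H}=N/D$, and hence of $\mathcal{F}$, so the pole hypothesis of the first assertion in fact forces every internal mode of the ODE subsystem into the open left half plane and $X(t)\to 0$ exponentially. The infinite-dimensional half is then handled through the reflected-wave description of Section~2: with the Dirichlet excitation $u(0,t)=KX(t)$ decaying exponentially and reflection coefficient satisfying $|\alpha|<1$, the wave energy $\|u(\cdot,t)\|^2+\|u_t(\cdot,t)\|^2$ decays exponentially as well, which gives $(X,u,u_t)\to(0,0,0)$ in $\mathbb{H}$. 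The main obstacle will be this last step: input/output stability of $\mathcal{F}$ a priori controls only the output $Y=KX$, and promoting it to asymptotic decay of the full infinite-dimensional state requires simultaneously using stabilizability and detectability to upgrade input/output stability to internal stability of the ODE part, and the specific wave-reflection structure with $|\alpha|<1$ to propagate this decay into the distributed state $u$ via an explicit energy estimate.
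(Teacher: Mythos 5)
Your proposal is correct and follows essentially the same route as the paper: small $\tau$-stabilizability makes the pole condition sufficient for input/output stability (the paper cites Theorem~9.9.1 of a standard NFDE reference, whose content your Paley--Wiener sketch reproduces), the equilibrium set is obtained by setting time derivatives to zero (the paper cites Proposition~2 of \cite{besselString} where you compute it directly, correctly), and asymptotic stability follows from stabilizability/detectability ruling out unstable pole/zero cancellations so that the internal ODE modes converge. Your final step propagating the decay of $KX$ into the distributed state $u$ via the reflection structure with $|\alpha|<1$ is stated at the same level of detail as the paper's ``all states will be indeed converging,'' so no gap relative to the paper's own argument.
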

\begin{proof}
	If system \eqref{eq:problem} is small $\tau$-stabilizable, then it is input/output stable if all the poles of $\mathcal F$ are with a strictly negative real part (Theorem 9.9.1 from \cite{opac-b1084291}). The link between asymptotic and input/output stability comes from the pole/zero cancellation. If the system $\Sigma(A, B, K)$ is stabilizable and detectable, then, the pole/zero simplification in $\mathcal F$ are with negative real parts. It means that the non-observable or non-controllable parts of system \eqref{eq:problem} are stable. Then, all states will be indeed converging. The study of equilibrium points has been established in Proposition 2 of \cite{besselString} and if $A+BK$ is not singular, state $X$ converges toward zero and the system is asymptotically stable.
\end{proof}

Now that we established some key properties regarding the characteristics of \eqref{eq:problem}, the stability analysis can be pursued. We consider two main approaches, one using pole locations and another with robust stability tools.

\section{A frequency domain argument on stability}

Once the characteristic equation is established, different methodologies can be applied to assert the input/output stability of system \eqref{eq:problem}. Various techniques can be adopted for this purpose, see for example \cite{sipahi2011stability}, including those where $c_{eq}(\cdot, c)$ in \eqref{eq:CE} has the coefficients explicitly depending on the delay $c^{-1}$.This problem has been investigated in some studies\cite{beretta2002geometric,jin2018stability}. 
In this part, we focus on the exact treatment of this problem coming from the time-delay system literature. Specifically, a pole location argument is taken based on frequency domain tools in order to assess stability. We study the stability problem using the Cluster Treatment of Characteristic Roots (CTCR) methodology originally proposed in \cite{olgac2002exact}. Let $\tau = c^{-1}$ and regrouping the terms by their delay dependence, characteristic equation \eqref{eq:CE} becomes:
\begin{equation} \label{eq:a}
	c_{eq}(s, \tau) = a_0(s, \tau) + a_1(s, \tau) e^{-\tau s} + a_2(s, \tau) e^{- 2 \tau s},
\end{equation}
with $a_0(s, \tau) = \left(1+ \frac{c_0}{\tau} \right) D(s)$, $a_1(s, \tau) = -2 N(s)$ and $a_2(s, \tau) = \left(1- \frac{c_0}{\tau} \right) D(s)$. We then claim that the system described by transfer function $\mathcal{F}$ can switch from stable to unstable behavior, or vice-versa, for a given delay only if \eqref{eq:CE} has a pole on the imaginary axis\cite{olgac2002exact}.

CTCR starts by exhaustively detecting all the imaginary axis poles $s = i \omega$, along with their corresponding delay values (Proposition 1 of \cite{olgac2002exact}). Next, CTCR identifies that each pole $s = i \omega$ has a unique crossing direction over the imaginary axis for all the delays creating this crossing (Proposition 2 of \cite{olgac2002exact}). Knowing the number of poles at $\tau = 0$, which is trivial to assess, it is then possible to use the information regarding crossing directions and at which delay values such crossings occur to track the pole locations across the imaginary axis. With this idea, it becomes possible to count the number of poles on the right-half plane for a given delay value $\tau > 0$. Whenever there are no unstable poles for certain delays, we then state that the system at hand is input/output stable for those delays.

CTCR framework has already been demonstrated on neutral systems\cite{olgac2004practical} while respecting the small $\tau$-stabilizability property. In the following, we briefly summarize this framework from the cited studies and also point out the main differences introduced while studying the particular system \eqref{eq:problem}. Following the process described in \cite{olgac2002exact,olgac2004practical}, the Rekasius substitution\cite{rekasius} is defined as:
\begin{equation} \label{eq:rekasius}
	e^{- \tau s} := \frac{1 - Ts}{1 + Ts}, \quad \tau \in \mathbb{R}^+, \quad T \in \mathbb{R}, \quad s = i \omega,
\end{equation}
which is an \textit{exact} substitution of exponential terms when $s = i \omega$. This substitution is different than Pad{\'e} approximation since in general $T \neq \tau / 2$.
%This mapping is an exact substitution and the mapping condition gives for $\omega > 0$:
%\begin{equation}\label{eq:tau}
%	\tau = \frac{2}{\omega} \left[ \text{tan}^{-1} ( \omega T) \pm \ell \pi \right], \quad \ell = \ell_0, \ell_0 + 1, \dots,
%\end{equation}
%with $\ell_0$ the first integer for which $\tau$ is positive.
Next, substituting \eqref{eq:rekasius} into the characteristic equation \eqref{eq:CE} and expanding by $(1+Ts)^2$, which does not bring any artificial imaginary poles, we obtain a \emph{transformed} characteristic equation:
\[
	\bar{c}_{eq}(s,T) = \left( 1+ \cfrac{2 c_0}{\tau} T s + T^2 s^2 \right) D(s) - N(s) (1 - T^2 s^2),
\]
which is nothing but a multinomial. Moreover the imaginary poles $s = i\omega$ of the original characteristic equation and this multinomial are identical\cite{olgac2002exact} and hence one can alternatively compute the imaginary poles $s = i\omega$ from this multinomial, which is a much easier task. To this end, first build the coefficients $\left\{ b_k(T, \tau) \right\}_{k \in (0, n+2) }$, such that:
\begin{equation} \label{eq:b}
	\bar{c}_{eq}(s, T) = \sum_{k=0}^{n+2} b_k(T, \tau) s^k.
\end{equation}

Next notice that the presence of delay $\tau$ as a coefficient in \eqref{eq:b} is not a standard form, which may complicate the computation of imaginary poles and the application of Proposition 2 in \cite{olgac2002exact} as coefficients influenced by delays may also rule out certain periodicity properties. This issue is removable in the particular problem at hand. We consider the following manipulation: as $c_0$ is always divided by the delay $\tau$ in $b_k$, if one defines a new positive variable $c_1 = c_0 \tau^{-1}$, then $b_k$ depends only on $c_1$ and not on $\tau$ anymore. Working at a given strictly positive $c_1$ removes the dependence of $b_k$ in $\tau$. Since they are now independent, the methodology still applies. This manipulation also shows that $c_1$ is a variable of interest when it comes to studying the parametric stability boundaries of damped waves.

Using $b_k$ and $a_k$ as defined in equations \eqref{eq:a} and \eqref{eq:b}, the CTCR methodology provides the boundaries of the parameter space in which the input-output stability of system \eqref{eq:problem} holds\footnote{Here, we provide the general framework of CTCR for an exact stability analysis. Handling degenerate cases requires care as was demonstrated in \cite{sipahi2003degenerate}. Such special cases, for a slightly different $c_1$, will however not arise.}. This method is summarized as follows for a given $c_1 =c_0 \tau^{-1}$:
\begin{enumerate}
	\item Using $\bar{c}_{eq}$, find the roots $s = i \omega$ corresponding to $T \in \mathbb{R}$, e.g., by using Routh's array. There are only a finite number of such solutions (Proposition 1 of \cite{olgac2002exact}) ;
	\item For each $T \in \mathbb{R}$ obtained previously, we already have $\omega$ where a crossing on the imaginary axis exists. For each $\omega$, there is a root tendency, indicating the unique direction of the crossing independent of the delays creating that crossing (Proposition 2 of \cite{olgac2002exact}).
	\item Then, using the inverse transformation of Rekasius transformation in \eqref{eq:rekasius}, it is possible to find all the delays $(\tau_{\ell})$ corresponding to each pair of $(T, \omega)$ and falling in an interval from $0$ up to a target delay value $\tau_{max}$. Sorting these delays in ascending order, and starting with the number of unstable roots for $\tau = 0$, the number of unstable roots for a delay $\tau < \tau_{max}$ can be accounted by observing the root tendency property of the crossings.
	\item The stability areas of system \eqref{eq:problem} for a given $c_1 = c_0 \tau^{-1}$ are when no unstable poles are detected. Since $\tau$ is known at this point, $c_0$ can be recovered: $c_0 = c_1 \tau$.
\end{enumerate}
%\begin{enumerate}
%	\item We build the Routh array of $c_{eq}^*$ using $b_k$ and the only unknown is $T$;
%	\item For each $T_j$ such that there is at least one sign change in the first column of the Routh array, we can find positive reals $\omega_{jk}$ such that $c_{eq}^*(i \omega_{ij}, T_i) = 0$;
%	\item For each couple $(T_j, \omega_{jk})$, there is a root tendency number ($R_T(T_j, \omega_{jk})$), calculated as follow:
%	\begin{equation}
%		R_T(T, \omega) = \text{sign} \left[ \mathfrak{I}\text{m} \left( \frac{\displaystyle \sum_{m = 0}^2 a_m'(i \omega) f(T, \omega)^m}{\displaystyle \sum_{m = 0}^2 m a_m(i \omega) f(T, \omega)^m} \right) \right],
%	\end{equation}
%	where $f(T, \omega) = \cfrac{1 - i T \omega }{1 + i T \omega}$ and $'$ is the symbol for the derivative operation;
%	\item Then it is possible to find all the delays $(\tau_{\ell})$ between $0$ and a given delay $\tau_{max}$ using equation \eqref{eq:tau} for each couple $(T_j, \omega_{jk})$ and we associate to each $\tau_{\ell}$ the value $R_T(\tau_{\ell}) = R_T(T_j, \omega_{jk})$;
%	\item Ordering them in ascending order, and calculating $n_u(0)$ the number of unstable roots for $\tau = 0$, the number of unstable roots for a delay $\tau < \tau_{max}$ is:
%	\begin{equation}
%		n_u(\tau) = n_u(0) + \sum_{\tau_{\ell} < \tau} 2 R_T( \tau_{\ell});
%	\end{equation}
%	\item The stability areas of system \eqref{eq:problem} for a given $c_1 = c c_0$ are when $n_u(c^{-1}) = 0$ for $c_0 = c_1 c^{-1}$.
%\end{enumerate}

We point out that the coefficients $b_k$ depend linearly on $c_1$ and then the roots of $\bar{c}_{eq}(\cdot, T)$ vary continuously subject to $c_1$. In other words, the delays $\tau$ for which there is a crossing vary continuously relative to $c_1$. The border of each stability area on the map $(c_1, \tau)$ is consequently continuous. This is one of the arguments for considering the mapping $(c_1, \tau)$ instead of $(c_0,c)$.

%\begin{remark}\label{rem:instable}To be $\tau$-stabilizable, \cite{sipahi2003degenerate} shows that the condition $NS(T = 0^{-}) - NU(\tau = 0) = n$ must be satisfied where $NS(T = 0^-)$ is the number of sign changes in the Routh array of $CE(s, T)$ when $T \to 0^-$ and $NU(\tau = 0)$ is the number of unstable roots of $CE(s, \tau = 0)$. Thanks to equation \eqref{eq:b} of degree $n+2$, then $NS(T = 0^+) \infeq n+2$. All these considerations lead to $NU(\tau = 0) \infeq 2$, or in other words, $A+BK$ has at most $2$ positive eigenvalues. \end{remark}

\section{Robust Stability Analysis}

The previous stability analysis results are exact, however, we still need to develop efficient tools to deal with robustness issues with respect to, for example, an uncertainty on $A$ or $c$. Here, we aim at obtaining tools not only for the specific problem \eqref{eq:problem}, but for a general interconnection dynamic. That is why tools coming from the robust analysis are also considered. In \cite{besselString}, the authors built a Lyapunov functional to ensure the exponential stability and the main drawback was an important number of decision variables making the treatment computationally demanding for large scale systems. This section aims at providing analysis tools with similar results but of lower computational complexity.

Since the studied system is an interconnection between two subsystems, here, the stability of the interconnection is stated under some conditions on each subsystem. Two tools coming from the robust analysis are considered: Small Gain theorem and Quadratic Separation.

\subsection{Small-Gain Theorem}
We consider the block diagram of Figure \ref{fig:block1} where the wave equation is treated as a disturbance. The transfer functions of the disturbance and the plant are borrowed from Section 3. The following stability criteria is a direct application of the Small-Gain Theorem.

\begin{theo} \label{smallGain}
	Let system \eqref{eq:problem} with $A$ Hurwitz with its $H_{\infty}$ less than $1$. The system is input-output stable for $(c, c_0)$ if the following condition is satisfied:
	\begin{equation} \label{eq:smallGain}
		\| \mathcal H \|_{\infty} = \max_{w \in \mathbb{R}^+} |\mathcal{H}(iw)| < c c_0.
	\end{equation}
\end{theo}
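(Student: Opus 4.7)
The plan is to apply the Small-Gain theorem (in the infinite-dimensional version, e.g.\ from \cite{curtain1995introduction}) directly to the feedback interconnection of Figure~\ref{fig:block1}, viewed as the loop $(\mathcal H, \mathcal W)$ with $\mathcal H$ the finite-dimensional plant and $\mathcal W$ the wave-equation transfer function derived in \eqref{eq:W}. The Small-Gain conclusion, together with Proposition~\ref{lem:tauStabilizable}, will yield input/output stability as soon as $\|\mathcal H\|_\infty\,\|\mathcal W\|_\infty < 1$. So the whole argument reduces to computing $\|\mathcal W\|_\infty$ explicitly and rewriting the inequality.

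First I would verify that both subsystems are individually stable, so that $\|\mathcal H\|_\infty$ and $\|\mathcal W\|_\infty$ are well-defined. Stability of $\mathcal H$ is immediate because $A$ is Hurwitz. Stability of $\mathcal W$ was already observed after \eqref{eq:W}: the poles of $\mathcal W$ are located on the vertical line $\mathfrak{R}\mathrm{e}(s)=\tfrac{c}{2}\log|\alpha|$, which is strictly in the open left half plane whenever $c_0>0$ and $cc_0\neq 1$, a case covered by the theorem. This also ensures, via Proposition~\ref{lem:tauStabilizable}, that we are in the small $\tau$-stabilizable regime, so that input/output stability is indeed governed by the transfer-function analysis.

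Second, I would compute $\|\mathcal W\|_\infty$ by a direct modulus computation on $s=i\omega$. Using \eqref{eq:W} with $\beta = cc_0$, the denominator satisfies
\begin{equation*}
\bigl|\,1+\beta + (1-\beta)e^{-2i\omega/c}\bigr|^2 = (1+\beta)^2 + (1-\beta)^2 + 2(1-\beta^2)\cos(2\omega/c) = 2(1+\beta^2) + 2(1-\beta^2)\cos(2\omega/c),
\end{equation*}
whose minimum over $\omega\in\mathbb R^+$ is attained at $\cos(2\omega/c) = -1$ and equals $4\beta^2$. Since the numerator $|2e^{-i\omega/c}|=2$ is constant, this gives the closed form
\begin{equation*}
\|\mathcal W\|_\infty = \frac{1}{cc_0}.
\end{equation*}

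Finally, the Small-Gain condition $\|\mathcal H\|_\infty\,\|\mathcal W\|_\infty<1$ then becomes exactly $\|\mathcal H\|_\infty < cc_0$, which is the hypothesis \eqref{eq:smallGain}. The conclusion on input/output stability of \eqref{eq:problem} follows. Nothing in this argument is delicate: the only computational step is the modulus calculation above, and the only conceptual point to be careful about is invoking a version of the Small-Gain theorem valid for the infinite-dimensional block $\mathcal W\in H^\infty(\mathbb C^+)$, which is standard material in \cite{curtain1995introduction}. The residual mild subtlety — ensuring the supremum of $|\mathcal W(i\omega)|$ is actually attained so that the strict inequality in the theorem is the right one — is resolved by noting the minimum of the denominator is reached at the explicit frequency $\omega=\pi c/2$.
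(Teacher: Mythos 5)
Your overall strategy is the same as the paper's (compute $\|\mathcal W\|_\infty$ explicitly and invoke the infinite-dimensional Small-Gain theorem), but there is a genuine error in the modulus minimization that breaks the argument in the regime $cc_0>1$. Writing $\beta=cc_0$, the quantity you minimize is
\begin{equation*}
2(1+\beta^2)+2(1-\beta^2)\cos(2\omega/c),
\end{equation*}
and its minimum over $\cos(2\omega/c)\in[-1,1]$ is attained at $\cos(2\omega/c)=-1$ \emph{only when the coefficient $1-\beta^2$ is nonnegative}, i.e.\ when $cc_0\le 1$. For $cc_0>1$ (equivalently $\alpha<0$) the coefficient is negative, the minimum is attained at $\cos(2\omega/c)=+1$ and equals $4$, so that $\|\mathcal W\|_\infty=1$ and not $(cc_0)^{-1}$. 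Your claimed closed form $\|\mathcal W\|_\infty=(cc_0)^{-1}$ therefore underestimates the true norm whenever $cc_0>1$, and in that regime the condition $\|\mathcal H\|_\infty<cc_0$ alone does not give $\|\mathcal H\|_\infty\|\mathcal W\|_\infty<1$.

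The paper's proof handles exactly this by establishing $\|\mathcal W\|_\infty=\max\left((cc_0)^{-1},1\right)$ and splitting into two cases: for $cc_0\le 1$ the Small-Gain condition is $\|\mathcal H\|_\infty<cc_0$, while for $cc_0>1$ it reduces to $\|\mathcal H\|_\infty<1$, which holds by the standing hypothesis of the theorem (``$A$ Hurwitz with its $H_\infty$ less than $1$''). You state that hypothesis at the outset but never use it; it is precisely what rescues the case $cc_0>1$. To repair your proof, keep your computation for $cc_0\le 1$, redo the minimization for $cc_0>1$ to get $\|\mathcal W\|_\infty=1$, and close that case by invoking $\|\mathcal H\|_\infty<1$. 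The rest of your argument (stability of each subsystem, the role of Proposition~\ref{lem:tauStabilizable}, and the appeal to Theorem~9.1.7 of \cite{curtain1995introduction}) matches the paper and is fine.
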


\begin{remark} Note that the condition $c_0 > 0$ (and consequently $|\alpha| < 1$) is equivalent to $\| \mathcal W \|_{\infty}$ bounded.\end{remark}

\begin{proof}
Beforehand, the infinity norm of the disturbance is computed. Some calculations lead to $\| \mathcal W \|_ {\infty} = \frac{2}{(1+c c_0) \min_{\omega \supeq 0} \left\lvert 1 + \alpha e^{-2 i \omega / c} \right\rvert}$.
For $|\alpha| < 1$, the function $\omega \mapsto 1 + \alpha e^{-2 i \tau \omega}$ is inside the circle on the complex plane centered at $1$ and of radius $\left\lvert  \alpha \right\rvert  < 1$. The minimum of $\| \mathcal W \|_{\infty}$ is $\max \left( (c c_0)^{-1} , 1 \right)$.\\
Then, using the small gain theorem for infinite dimensional systems as stated in Theorem~9.1.7 by  \cite{curtain1995introduction}, the interconnected system is asymptotically stable if $\| \mathcal H \|_{\infty} \| \mathcal W \|_{\infty} < 1$ and each subsystem is stable. The second condition is ensured if $\| \mathcal H \|_{\infty} < 1$ and $c_0 > 0$. Considering the case $c c_0 \infeq 1$ leads to $\| \mathcal W \|_{\infty} = (c c_0)^{-1}$, it then follows that robust stability is ensured. If $c c_0  > 1$, then the stability is ensured if $\| \mathcal H \|_{\infty} < 1$, which is true by assumption.
\end{proof}

This theorem is quite conservative, mostly because the Small-Gain Theorem provides only a sufficient condition. However the resulting stability test is simple and some properties can be deduced.

\begin{proposition}
	If $\| \mathcal H \|_{\infty} < 1$, then there exists a function $c_0 \mapsto c_{min}(c_0)$ where $c_{min}(c_0) \infeq \| \mathcal H \|_{\infty} c_0^{-1}$. That leads to three properties:
		\begin{enumerate}
			\item Since $\| \mathcal H \|_{\infty} < 1$, $A+BK$ is stable,
			\item  For a given $c_0 > 0$,  system \eqref{eq:problem} is stable for all $c \supeq c _{min}(c_0)$,
			\item $\displaystyle \lim_{c_0 \to + \infty} c_{min}(c_0) = 0$.
		\end{enumerate}
\end{proposition}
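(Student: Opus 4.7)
The plan is to handle the three claims separately, reducing items~2 and~3 to direct consequences of Theorem~\ref{smallGain}, and treating item~1 via a maximum-modulus argument on $\mathcal H$.

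\textbf{Item 1.} Since $B$ is $n\times 1$ and $K$ is $1\times n$, the matrix determinant lemma gives
\[
\det(sI - A - BK) = \det(sI - A)\bigl(1 - \mathcal H(s)\bigr).
\]
Because $A$ is Hurwitz, $\det(sI-A)$ has no zero in the closed right half-plane, so the eigenvalues of $A+BK$ coincide with the zeros of $1-\mathcal H(s)$ there. Now $\mathcal H$ is rational with $\deg D > \deg N$, hence analytic on $\overline{\mathbb C^+}$ and vanishing at infinity; the maximum modulus principle therefore yields
\[
\sup_{\Re(s)\geq 0}|\mathcal H(s)| \;=\; \sup_{\omega\in\mathbb R}|\mathcal H(i\omega)| \;=\; \|\mathcal H\|_\infty < 1,
\]
so $1-\mathcal H(s)\neq 0$ on $\overline{\mathbb C^+}$, proving $A+BK$ Hurwitz.

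\textbf{Items 2 and 3.} By Theorem~\ref{smallGain}, the coupled system is input/output stable as soon as $\|\mathcal H\|_\infty < cc_0$, i.e.\ $c > \|\mathcal H\|_\infty / c_0$. Defining
\[
c_{min}(c_0) \;=\; \inf\{c>0 \,:\, \text{system \eqref{eq:problem} is input/output stable}\},
\]
one immediately obtains $c_{min}(c_0) \leq \|\mathcal H\|_\infty/c_0$, and by monotonicity of the Small-Gain sufficient condition in $c$, every $c \geq c_{min}(c_0)$ gives stability, which is item~2. Item~3 then follows from the sandwich $0 < c_{min}(c_0) \leq \|\mathcal H\|_\infty/c_0$ and the assumption $\|\mathcal H\|_\infty < 1$, by letting $c_0 \to +\infty$.

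\textbf{Expected difficulty.} Items~2 and~3 are essentially bookkeeping on top of Theorem~\ref{smallGain}; the only substantive step is item~1, which requires noticing that the Small-Gain hypothesis, stated on the imaginary axis, can be promoted to the full closed right half-plane because $\mathcal H$ is analytic there (thanks to $A$ Hurwitz) and decays at infinity. Once that analyticity is invoked, the closure of the feedback loop via the determinant identity gives stability of $A+BK$ without further work.
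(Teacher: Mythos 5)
The paper states this proposition without any proof, presenting it as a list of properties ``deduced'' from Theorem \ref{smallGain}, so your write-up supplies more detail than the original. Your item 1 is correct and provides the one substantive argument the paper leaves entirely implicit: the rank-one determinant identity $\det(sI-A-BK)=\det(sI-A)\bigl(1-\mathcal H(s)\bigr)$, combined with analyticity of the strictly proper $\mathcal H$ on the closed right half-plane (valid because $A$ is Hurwitz, a hypothesis of Theorem \ref{smallGain} that you correctly import --- without it the claim is false, e.g.\ $\mathcal H(s)=\epsilon/(s-1)$), lets the maximum modulus principle promote $\|\mathcal H\|_\infty<1$ from the imaginary axis to the whole closed right half-plane and rule out zeros of $1-\mathcal H$ there.

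There is, however, a genuine flaw in your item 2. You define $c_{min}(c_0)=\inf\{c>0:\text{system \eqref{eq:problem} is input/output stable}\}$, i.e.\ the infimum of the \emph{actual} stable set, and then assert that every $c\supeq c_{min}(c_0)$ gives stability ``by monotonicity of the Small-Gain sufficient condition.'' That monotonicity only yields stability for every $c>\|\mathcal H\|_\infty/c_0$; it says nothing about $c$ lying between your $c_{min}(c_0)$ and $\|\mathcal H\|_\infty/c_0$, where the system could well be unstable, since the set of stable wave speeds need not be upward-closed --- the paper's Section 6 exhibits exactly such stability pockets along the delay axis $\tau=c^{-1}$, so this is not a hypothetical objection. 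The repair is immediate: take $c_{min}(c_0):=\|\mathcal H\|_\infty/c_0$ itself, or $\inf\{c^*>0:\text{\eqref{eq:problem} is stable for all }c>c^*\}$, which is what the paper means by $c_{min}$ in Section 6.1. With either choice the bound $c_{min}(c_0)\infeq\|\mathcal H\|_\infty c_0^{-1}$ holds, item 2 follows directly from Theorem \ref{smallGain} (up to the harmless boundary case $c=c_{min}$, where the strict small-gain inequality fails --- a quibble already present in the proposition's ``$c\supeq c_{min}$''), and item 3 follows from $0\infeq c_{min}(c_0)\infeq\|\mathcal H\|_\infty/c_0\to 0$; note that for the limit you only need $\|\mathcal H\|_\infty$ finite, not less than $1$.
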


To reduce the conservatism introduced in this subpart, another framework is proposed.

\subsection{Quadratic Separation - Preliminary result}

The Small-Gain Theorem ensures the exponential stability of an interconnected system composed of a disturbance and a plant both stable. To decrease the conservatism and consider a broader class of interconnected systems, the Quadratic Separation (QS) framework can be used. This framework has been originally proposed in \cite{668829} and it studies the well-posedness of a closed-loop system made up of an unknown disturbance and a plant.\\
We describe the methodology of QS to provide a preliminary result on system \eqref{eq:problem} and then we extend this stability analysis to a more general case. QS states the well-posedness of a generic system described in Figure \ref{fig:sepQuad}, where $\nabla$ is called the uncertainty matrix and belongs to a set $\dnabla$. The well-posedness is defined as follows:
\begin{de}
	The interconnected system described on Figure \ref{fig:sepQuad} is well-posed with respect to the norm $\| \cdot \|$ if
	\begin{equation}
		\exists \gamma > 0, \forall~\nabla \in \dnabla, \forall~\bar{\omega}, \bar{z}, \quad \left\| \left[ \begin{matrix} \omega \\ z \end{matrix} \right] \right\| \leq \gamma \left\| \left[ \begin{matrix} \bar{\omega} \\ \bar{z} \end{matrix} \right] \right\|,
	\end{equation}
	where $\bar{\omega}$ and $\bar{z}$ are the references.
\end{de}
\begin{figure}
	\centering
	\includegraphics[width=5cm]{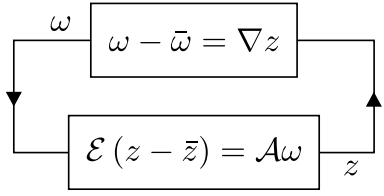}
	\caption{Feedback system for quadratic separation. $\mathcal{A}$ and $\mathcal{E}$ are matrices and $\nabla \in \dnabla$ is the uncertainty. $\bar{\omega}$ and $\bar{z}$ are the references.}
	\label{fig:sepQuad}
\end{figure}
In our case here, we consider the disturbance to be an operator acting on two signals $w$ and $z$ in Laplace domain. In other words, the following system is obtained:
\begin{equation} \label{eq:qsframework}
	\left\{
	\begin{array}{l}
		\Omega(s) = \nabla(s) Z(s), \\
		\mathcal{E} Z(s) = \mathcal{A} \Omega(s),
	\end{array}
	\right.
\end{equation}
with $\mathcal{E}$ full column rank, $Z$ and $\Omega$ are the Laplace transformations of $z$ and $\omega$, respectively and $\mathcal{A}$ is a description of the system, its explicit form is given after. The well-posedness of system \eqref{eq:qsframework} is assessed in Theorem 1 and Corollary 2 from \cite{peaucelle2007quadratic}. Following this formulation, the next theorem is stated.
\begin{theo} \label{sec:theoQS_general}
	The system described by Figure \ref{fig:sepQuad} and Equation \eqref{eq:qsframework} is well-posed if and only if there exists a real matrix of appropriate dimension $\Theta = \Theta^{\top}$ such that:
\begin{equation} \label{eq:separateur}
	 \forall s \in \mathbb{C}^+, \quad \left[ \begin{matrix} I \\ \nabla(s) \end{matrix} \right]^* \Theta \left[ \begin{matrix} I \\ \nabla(s) \end{matrix} \right] \preceq 0,
\end{equation}
\begin{equation} \label{eq:LMI}
	{\left[ \begin{matrix} \mathcal{E} & - \mathcal{A} \end{matrix} \right]^{\perp}}^{\top} \Theta \left[ \begin{matrix} \mathcal{E} & - \mathcal{A} \end{matrix} \right]^{\perp} \succeq 0,
\end{equation}
where $\nabla^*$ is the trans-conjugate of $\nabla$ and $\mathcal{E}$ is full-rank.
\end{theo}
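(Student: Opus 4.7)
The plan is to view Theorem~\ref{sec:theoQS_general} as a direct application of the general quadratic separation result of \cite{peaucelle2007quadratic} (Theorem~1 together with Corollary~2 there), specialised to the situation where the uncertainty $\nabla(s)$ is a frequency-dependent operator acting between the two Laplace-domain signals $Z(s)$ and $\Omega(s)$. The preliminary step is therefore to check that the abstract setup of \cite{peaucelle2007quadratic}---a linear implicit relation $\mathcal{E} z = \mathcal{A}\omega$ with $\mathcal E$ injective, closed by a feedback $\omega = \nabla z$---actually covers \eqref{eq:qsframework}; this is immediate from the hypothesis that $\mathcal{E}$ is full column rank, which lets us parametrise $Z(s)$ linearly in $\Omega(s)$ and in the references $(\bar\omega,\bar z)$.

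For the sufficiency direction I would run the usual S-procedure argument. Assume $\Theta$ exists and evaluate the quadratic form $[I;\nabla(s)]^*\Theta[I;\nabla(s)]$ on the trajectory $(Z(s),\Omega(s))$ at each $s\in\mathbb{C}^+$: condition \eqref{eq:separateur} forces non-positivity along the graph $\Omega(s)=\nabla(s) Z(s)$, while condition \eqref{eq:LMI}, read on the kernel of $[\mathcal{E}\ -\mathcal{A}]$ augmented with the reference block, forces non-negativity modulo a quadratic term in $(\bar\omega,\bar z)$. Adding the two inequalities produces a pointwise coercivity estimate of the form $\|(\Omega(s),Z(s))\|^2 \leq \gamma^2 \|(\bar\omega(s),\bar z(s))\|^2$, uniform in $s$, and a Plancherel/Paley--Wiener argument on the imaginary axis yields the $L^2$ bound that defines well-posedness in the sense of Definition~3. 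The necessity direction is the standard separating-hyperplane statement: well-posedness means that the graph of $\nabla(s)$ and the image of $[\mathcal{E}\ -\mathcal{A}]$ are transverse with a uniform inverse bound, which produces a symmetric bilinear form that is $\preceq 0$ on the first subspace and $\succeq 0$ on the second; this is exactly $\Theta$, and the construction is carried out explicitly in \cite{peaucelle2007quadratic}.

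The main obstacle I anticipate is not the algebraic manipulation but the functional-analytic upgrade: the result of \cite{peaucelle2007quadratic} is stated for matrix-valued $\nabla$, whereas the intended target is $\nabla(s)$ containing the infinite-dimensional wave transfer function $\mathcal{W}(x,s)$ acting on Hardy-space signals over $\mathbb{C}^+$. The delicate step is to verify that the pointwise inequality \eqref{eq:separateur} is still both necessary and sufficient for well-posedness when $\nabla$ ranges over the operator class $\dnabla$ of interest here, which requires boundedness of $\nabla(s)$ uniformly in $s$ (guaranteed by $c_0>0$, as observed in the Remark following Theorem~\ref{smallGain}) and a suitable compactness/approximation argument in the necessity part. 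Once this extension is in place, the statement reduces to a direct citation of the quadratic separation theorem.
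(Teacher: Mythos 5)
Your proposal matches the paper's treatment: the paper gives no self-contained proof of this theorem and simply invokes Theorem~1 and Corollary~2 of \cite{peaucelle2007quadratic}, stating afterwards that the result ``has been adapted'' to the case where $\nabla$ is an operator parametrised by the Laplace variable $s$ over $\mathbb{C}^+$. Your additional sketch of the S-procedure/separating-hyperplane mechanics and your flag on the matrix-valued-to-operator-valued upgrade are both sound and, if anything, more explicit about the gap than the paper itself, which asserts the adaptation without argument.
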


The previous theorem has been adapted to our system considering $\nabla$ is an operator depending only on the Laplace variable $s$ such that $\dnabla = \left\{ \nabla(s) | s \in \mathbb{C}^+ \right\}$. In this case, the well-posedness of system \eqref{eq:qsframework} implies its input/output stability. Indeed, the well-posedness of system \eqref{eq:qsframework} with $\nabla \in \dnabla$ ensures that there is a unique solution and $\omega$ and $z$ are bounded by the exogenous signals $\bar{\omega}$ and $\bar{z}$. Then, there are no poles with a strictly positive real part and consequently, the system is input/output stable.

Now, we can transform the block diagram in Figure \ref{fig:block1} into a suitable form to apply Theorem \ref{sec:theoQS_general}. Consider the following signals:
\begin{equation}
	z(t) = \left[ \begin{matrix} \dot{X}^{\top}(t) \ & K X(t) & \ K X (t-\tau) & \ K \dot{X}(t) \end{matrix} \right]^{\top}\!, \quad \quad \omega(t) = \left[ \begin{matrix} X^{\top}(t) \ & K X(t-\tau) \ & u(1,t) \ & K ( X(t) - X(t-\tau) ) \end{matrix} \right]^{\top}.
\end{equation}

For the equivalence between \eqref{eq:qsframework} and transfer function \eqref{eq:F}, $\nabla, \mathcal{A}$ and $\mathcal{E}$ are defined as follows:
\begin{equation} \label{eq:def1QS}
	\begin{array}{ll}
		%\dnabla = \left\{ \nabla(s) \in \mathbb{C}^{} \in \mathbb{C} \ / \  \mathfrak{R}e(s) > 0 \right\}, \\
		\begin{array}{l}
			\forall s \in \mathbb{C}^+, \nabla(s) = \text{diag}\left( s^{-1}I_n, e^{-\tau s}, \delta(s), \delta_0(s) = \frac{1-e^{-\tau s} }{s} \right), \\
			\delta(s) = \cfrac{1+ \alpha}{1 + \alpha e^{-2\tau s}},
		\end{array} & \quad
		\mathcal{E} =\left[ \begin{matrix} I_n & 0_{n, 1} & 0_{n, 1} & 0_{n, 1} \\ 0_{1,n} & 1 & 0 & 0 \\ 0_{1,n} & 0 & 1 & 0 \\ -K & 0 & 0 & 1 \\ 0_{1,n} & 1 & -1 & 0 \end{matrix} \right], \quad \mathcal{A} = \left[ \begin{matrix} A & 0_{n, 1} & B & 0_{n, 1}  \\ K & 0 & 0 & 0 \\  0_{1,n} & 1 & 0 & 0 \\ 0_{1,n} & 0 & 0 & 0 \\ 0_{1,n} & 0 & 0 & 1 \end{matrix} \right].
	\end{array}
\end{equation}

Note that $\mathcal{E}$ is full column rank. 
%The uncertainty matrix $\nabla$ is defined for all $s$ with a strictly positive real part. Then, the well-posedness of system \eqref{eq:qsframework} implies the input/output stability of system \eqref{eq:problem}.

\begin{remark} The disturbance $\delta$ is related to the neutral part of system \eqref{eq:problem}. The disturbance component $\delta_0$ is related to Jensen inequality (see for instance \cite{gu2003stability}), a widely used inequality in the analysis of time-delay systems.
\end{remark}

We next need to propose a structure for the real-valued separator $\Theta$ such that inequality \eqref{eq:separateur} always holds for $\nabla \in \dnabla$. For $\Theta = \left[ \begin{smallmatrix} \Theta_{11} & \Theta_{12} \\ \Theta_{12}^{\top} & \Theta_{22} \end{smallmatrix} \right]$, the following structure is proposed:
\begin{equation} \label{eq:theta}
	\begin{array}{lll}
		\Theta_{11} = \text{diag}\left( 0_n, -Q, R(1 - \alpha^2) \gamma^2, -\tau^2 S \right), & \quad
		\Theta_{12} = \text{diag}\left( -P, 0, -R \gamma,  0 \right),  & \quad
		\Theta_{22} = \text{diag}\left( 0_n, Q, R, S \right),
	\end{array}
\end{equation}
with $P \in \mathbb{S}^n_+$ and $Q, R, S \in \mathbb{R}^+$. This selection is not new and is used in the examples presented in \cite{668829,peaucelle2007quadratic}. With this specific structure, for all $s \in \mathbb{C}^+$, the following holds:
\begin{equation}
	\!\left[ \begin{matrix} I \\ \nabla(s) \end{matrix} \right]^* \Theta \left[ \begin{matrix} I \\ \nabla(s) \end{matrix} \right] =
	 \begin{array}[t]{l}
		\text{diag}\left\{ - 2 P \mathfrak{R}\text{e} ( s^{-1} ), Q (\left\lvert e^{-\tau s} \right\rvert^2 -1) , \right. \quad
		\underbrace{R\left( (1 - \alpha^2) \gamma^2 - 2 \gamma \mathfrak{R}\text{e}\left( \delta(s) \right) + \left\lvert \delta(s) \right\rvert^2 \right)}_{\delta_{-1}(s)}, \quad
		\left. S \left( \left\lvert \delta_0(s) \right\rvert^2 -\tau^2\right) \right\}.
	\end{array}
\end{equation}

The third diagonal block can be written differently: $\delta_{-1}(s) = R\left( \lvert \delta(s) - \gamma \rvert^2 - \alpha^2 \gamma^2 \right)$. Equation \eqref{eq:def1QS} implies that $\delta$ is inside a circle, its center is $\gamma = \frac{1+\alpha}{1 - \alpha^2}$ and its radius is $\frac{(1 + \alpha) \lvert \alpha \rvert}{1 - \alpha^2} = \lvert \alpha \rvert \gamma$, guaranteeing that $\delta_{-1}(s) \infeq 0$.

Noticing also that $\forall s \in \mathbb{C}^+$, $\lvert e^{-\tau s}\rvert \infeq 1$, $\left\lvert \frac{1 - e^{-\tau s}}{s} \right\rvert \infeq \tau$, we get inequality \eqref{eq:separateur}.
All these considerations lead to the following stability theorem:
\begin{theo} \label{sec:theoQS} If there exist $P \in \mathbb{S}^n_+$ and $Q, R, S \in \mathbb{R}^+$ such that LMI \eqref{eq:LMI} holds for $\Theta$ defined in \eqref{eq:theta}, then system \eqref{eq:problem} is input-output stable.
\end{theo}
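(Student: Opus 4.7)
The plan is simply to invoke Theorem~\ref{sec:theoQS_general}, since most of the work has already been done in the paragraphs preceding the statement. The strategy is to cast system~\eqref{eq:problem} in the quadratic separation form~\eqref{eq:qsframework} via the signals $z$, $\omega$ and the triple $(\nabla,\mathcal{A},\mathcal{E})$ from~\eqref{eq:def1QS}, then verify the two hypotheses of Theorem~\ref{sec:theoQS_general} for the separator $\Theta$ defined in~\eqref{eq:theta}, and finally argue that well-posedness of the loop is equivalent to input/output stability in our setting.

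First, I would remark that the equivalence between the interconnection in Figure~\ref{fig:block1} and the representation~\eqref{eq:qsframework} with $(\nabla,\mathcal{A},\mathcal{E})$ from~\eqref{eq:def1QS} is a direct rewriting: the block $s^{-1}I_n$ encodes the ODE dynamics of $X$, the block $e^{-\tau s}$ encodes the retarded term $KX(t-\tau)$, the block $\delta(s)$ encodes the boundary transfer $\mathcal{W}(s)$ (recall $\mathcal{W}(s)=\delta(s)e^{-s/c}$ by~\eqref{eq:W}), and $\delta_0(s)$ is the standard Jensen-type auxiliary. Full column rank of $\mathcal{E}$ is visible from~\eqref{eq:def1QS}.

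Next, I would verify the separator inequality~\eqref{eq:separateur} for the block-diagonal $\Theta$ of~\eqref{eq:theta}. Because $\Theta$ is block-diagonal and $\nabla$ acts diagonally on the four scalar/vector channels, the quadratic form splits into four scalar inequalities, each of which is checked separately: for $s\in\mathbb{C}^+$ one has $\mathfrak{R}\text{e}(s^{-1})\geq 0$ so the first block $-2P\,\mathfrak{R}\text{e}(s^{-1})\preceq 0$ thanks to $P\succ 0$; the second block is nonpositive since $|e^{-\tau s}|\leqslant 1$; the fourth is nonpositive since $|\delta_0(s)|\leqslant \tau$; and for the third block the identity $\delta_{-1}(s)=R(|\delta(s)-\gamma|^2-\alpha^2\gamma^2)$ combined with the geometric fact that $\delta(s)$ lies in the disc of center $\gamma$ and radius $|\alpha|\gamma$ yields $\delta_{-1}(s)\leqslant 0$. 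This last geometric inclusion is the only nontrivial point, and the argument is essentially the one already sketched after~\eqref{eq:theta}: writing $\delta(s)-\gamma=\gamma\bigl(\tfrac{(1+\alpha)(1-\alpha e^{-2\tau s})-(1-\alpha^2)e^{-2\tau s}\cdot\alpha/(1+\alpha)}{1+\alpha e^{-2\tau s}}\bigr)$ and bounding using $|e^{-2\tau s}|\leqslant 1$ on $\mathbb{C}^+$ yields the claimed inclusion.

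With~\eqref{eq:separateur} verified and~\eqref{eq:LMI} assumed by hypothesis, Theorem~\ref{sec:theoQS_general} delivers well-posedness of the loop~\eqref{eq:qsframework}. To conclude, I would invoke the observation already stated below Theorem~\ref{sec:theoQS_general}: since $\nabla$ here depends only on $s$ and $\dnabla=\{\nabla(s)\mid s\in\mathbb{C}^+\}$, well-posedness prevents any closed-loop pole in the closed right half-plane, so all poles of $\mathcal{F}$ in~\eqref{eq:F} lie in the open left half-plane; combined with the small-$\tau$-stabilizability guaranteed by Proposition~\ref{lem:tauStabilizable} (which holds under the standing assumptions $c_0>0$, $cc_0\neq 1$), Corollary~1 then gives input/output stability of~\eqref{eq:problem}. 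The only step requiring actual work is the circle inclusion for $\delta$; everything else is bookkeeping in the quadratic separation framework.
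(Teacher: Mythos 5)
Your proposal is correct and follows exactly the route the paper itself takes (the paper's ``proof'' of Theorem~\ref{sec:theoQS} is essentially the text preceding its statement): cast \eqref{eq:problem} into \eqref{eq:qsframework} with the data \eqref{eq:def1QS}, verify \eqref{eq:separateur} block by block for the $\Theta$ of \eqref{eq:theta}, apply Theorem~\ref{sec:theoQS_general}, and use the observation that well-posedness over $\dnabla=\{\nabla(s)\mid s\in\mathbb{C}^+\}$ excludes closed-loop poles with nonnegative real part. The one concrete flaw is the explicit identity you give for $\delta(s)-\gamma$: it is false as written (its numerator simplifies to $(1+\alpha)-2\alpha e^{-2\tau s}$, and the resulting quantity is not bounded in modulus by $|\alpha|\gamma\,|1+\alpha e^{-2\tau s}|$, as one sees near $e^{-2\tau s}=-1$), so as stated that step would not deliver the disc inclusion. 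The correct factorization, using $\gamma=\frac{1+\alpha}{1-\alpha^2}=\frac{1}{1-\alpha}$, is
\[
\delta(s)-\gamma \;=\; -\,\alpha\gamma\,\frac{\alpha+e^{-2\tau s}}{1+\alpha e^{-2\tau s}},
\]
and since $w\mapsto\frac{\alpha+w}{1+\alpha w}$ maps the closed unit disc into itself for real $|\alpha|<1$ (because $|1+\alpha w|^2-|\alpha+w|^2=(1-\alpha^2)(1-|w|^2)\supeq 0$) while $|e^{-2\tau s}|\infeq 1$ on $\mathbb{C}^+$, one obtains $|\delta(s)-\gamma|\infeq|\alpha|\gamma$, which is exactly the circle inclusion the paper asserts and which makes the third diagonal block $\delta_{-1}(s)$ nonpositive. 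Everything else in your argument --- the three easy blocks, the full column rank of $\mathcal{E}$, and the passage from well-posedness to input/output stability (your extra appeal to Corollary~1 and small $\tau$-stabilizability is if anything more careful than the paper's remark following Theorem~\ref{sec:theoQS_general}) --- matches the paper.
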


\subsection{Quadratic Separation - Extended stability analysis}

\subsubsection{Motivations and main theorem}

The studies of time-delay systems during the last few years focused on reducing the conservatism of stability theorems (see \cite{6760001} for example). Indeed, it has been showed in \cite{6760001,seuret:hal-01065142,briat2011convergence} that Jensen's inequality leads to conservative results. The idea developed here is to enrich $\Omega$ and $Z$ wisely to improve Theorem \ref{sec:theoQS}. Following this idea, we aim at capturing the infinite dimensional behavior of this system, described in equation \eqref{eq:ux}, by adding new signals into the framework.

Indeed, quadratic separation clearly shows that the addition of more information will lead to a smaller kernel of $\left[ \begin{smallmatrix} \mathcal{E} & -\mathcal{A} \end{smallmatrix} \right]$ followed by an improvement of the stability criterion at a price of a more complex $\mathcal{A}$, $\mathcal{E}$ and $\nabla$. Following the methodology described in \cite{seuret:hal-01065142}, the new signals are projections of the infinite dimensional state $u$. This state is projected onto the orthogonal basis of shifted Legendre polynomials $\left\{ \mathcal{L}_k \right\}_{k \in [0, N]}$. Some useful properties of these polynomial are reminded in the sequel. For more information, the reader can refer to \cite{courant1966courant}.

\begin{theo} \label{sec:theoQSN}
For a given $N \in \mathbb{N}$, if there exist $P_N \in \mathbb{S}^{n+N}_+$ and $Q, R, S \in \mathbb{R}^+$ such that the LMI:
\begin{equation} \label{eq:LMIN}
	{\left[ \begin{matrix} \mathcal{E}_N & - \mathcal{A}_N \end{matrix} \right]^{\perp}}^{\top} \left[ \begin{matrix} \Theta_{N,1} & \Theta_{N,2} \\ \Theta_{N,2}^{\top} & \Theta_{N,3} \end{matrix} \right] \left[ \begin{matrix} \mathcal{E}_N & - \mathcal{A}_N\end{matrix} \right]^{\perp} \succeq 0,
\end{equation}
holds for
\begin{equation} \label{eq:ThetaN}
	\begin{array}{lll}
		\Theta_{N,1} = \text{diag} \left( 0_{n+N}, -Q, R(1-\alpha^2) \gamma^2, - \tau^2 S \right), & \quad
		\Theta_{N,2} = \text{diag} \left( -P_N, 0, -R \gamma, 0_{1, N+1} \right), & \quad
		\Theta_{N,3} = \text{diag} \left( 0_{n+N}, Q, R, S I_{N+1} \right),
		%\mathcal{A}_N = \left[ \begin{matrix} A_N & 0_{n+N, 1} & B_N & 0_{n+N, N} \\ K_N & 0 & 0 & 0_{1, N} \\  0_{n+N, 1} & 1 & 0 & 0_{1,N} \\ 0_{1,n} & 0 & 0 & 0_{1, N} \\ \tilde{L}_N & 0_{N,1} & 0_{N,1} & \tilde{I}_N \end{matrix} \right],
	\end{array}
\end{equation}
\begin{equation}
	\begin{array}{l}
		\!\! \mathcal{E}_N =\left[ \begin{matrix}
			I_{n} & 0_{n, N} & 0_{n, 1} & 0_{n, 1} & 0_{n, 1} \\
			0_{N, n} & I_{N} & 0_{N,1} & 0_{N,1} & 0_{N, 1} \\
			0_{1,n} & 0_{1,N} & 1 & 0 & 0 \\
			0_{1,n} & 0_{1, N} & 0 & 1 & 0 \\
			-K & 0_{1, N} & 0 & 0 & 1 \\
			0_{N+1,n} & 0_{N+1, N} & 1_{N+1, 1} &  -\mathbb{1}_{N+1} & 0_{N+1,1}
		\end{matrix} \right], \quad \mathcal{A}_N = \left[ \begin{matrix}
			A & 0_{n, N} & 0_{n, 1} & B & 0_{n, N+1} \\
			0_{N,n} & 0_{N} & 0_{N, 1} & 0_{N, 1} & \tilde{I}_N\!{\scriptstyle (1:N,:)} \\
			K & 0_{1,N} & 0 & 0 & 0_{1, N+1} \\
			0_{1,n} & 0_{1,N} & 1 & 0 & 0_{1,N+1} \\
			0_{1,n} & 0_{1,N} & 0 & 0 & 0_{1, N+1} \\
			0_{N+1,n} & L_N\!{\scriptstyle (:,1:N)} & 0_{N+1,1} & 0_{N+1,1} & \tilde{I}_N
		\end{matrix} \right], \\
	\end{array}
\end{equation}
with
\begin{equation} \label{eq:lik}
	\begin{array}{ll}
		\mathbb{1}_N = \left[ \begin{matrix} (-1)^0 & \cdots &(-1)^k & \cdots & (-1)^{N-1} \end{matrix} \right]^{\top}\!\!\!\!, \quad \quad \quad \quad \quad &
		\tilde{I}_N = \text{diag}\left( \left\{ \frac{1}{\sqrt{2k + 1}} \right\}_{k \in [0, N]} \right), \\
		L_N = \left[ \ell_{ij} \right]_{i,j \in [0, N]}, &
		\ell_{ik} = \left\{ \begin{array}{ll}
			0, & \text{ if } k \supeq i, \\
			(2k+1) \left(1-(-1)^{k+i} \right) c, \quad \quad \quad & \text{otherwise},
		\end{array} \right.
	\end{array}
\end{equation}
then system \eqref{eq:problem} is input/output stable.
\end{theo}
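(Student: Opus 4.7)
The plan is to reduce Theorem~\ref{sec:theoQSN} to Theorem~\ref{sec:theoQS_general} by enriching the signal vectors $z$ and $\omega$ of the preliminary construction with $N$ Legendre projections of the infinite-dimensional state $u$, re-computing the corresponding interconnection matrices $(\mathcal{E}_N, \mathcal{A}_N, \nabla_N)$, and verifying that the separator $\Theta_N$ in \eqref{eq:ThetaN} still satisfies the dissipation inequality \eqref{eq:separateur}. The input/output stability of \eqref{eq:problem} then follows from exactly the same argument used to pass from Theorem~\ref{sec:theoQS_general} to Theorem~\ref{sec:theoQS}.

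First, I would introduce, for $k = 0, \dots, N-1$, a projection of $u$ onto the $k$-th shifted Legendre polynomial $\mathcal{L}_k$, for instance via a change of variables that identifies the delayed boundary trace $s \mapsto KX(s)$ over $[t-\tau, t]$ with a function on $[0,1]$ and takes its $L^2$-inner product with $\mathcal{L}_k$. Using the orthogonality of $\{\mathcal{L}_k\}$, the boundary values $\mathcal{L}_k(0) = (-1)^k$ and $\mathcal{L}_k(1) = 1$, and the derivative recurrence $\mathcal{L}_k'(x) = \sum_{i=0}^{k-1} (2i+1)\bigl(1-(-1)^{k+i}\bigr)\mathcal{L}_i(x)$, I would derive the algebraic relations linking the new projections to the original signals. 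The $(-1)^k$ boundary values explain the column $\mathbb{1}_{N+1}$ of $\mathcal{E}_N$; the derivative recurrence produces the strictly lower-triangular matrix $L_N$ of \eqref{eq:lik}; the scaling $(2k+1)^{-1/2}$ in $\tilde I_N$ accounts for the non-unit $L^2$-norm of $\mathcal{L}_k$ on $[0,1]$; and the factor $c$ in $\ell_{ik}$ comes from the Jacobian of the time-to-space rescaling with wave speed. Assembled row by row, these identities give the augmented pair $(\mathcal{E}_N, \mathcal{A}_N)$ stated in the theorem, and $\mathcal{E}_N$ is full column rank by the same pivot pattern as in \eqref{eq:def1QS}.

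Second, I would check that the proposed $\Theta_N$ satisfies \eqref{eq:separateur} for every $s \in \mathbb{C}^+$. The first three diagonal blocks are unchanged from \eqref{eq:theta} and were already validated in the argument preceding Theorem~\ref{sec:theoQS}. The new $(4,4)$ block $\mathrm{diag}(-\tau^2 S\, I_{N+1},\, S\, I_{N+1})$ encodes the $N+1$ additional operators of $\nabla_N$ that realize the truncation of the Legendre expansion of the boundary trace. Bessel's inequality applied to the orthonormal family $\bigl\{\sqrt{2k+1}\,\mathcal{L}_k\bigr\}$ ensures that the sum of the squared projections is bounded by the squared $L^2$-norm of the generating signal, which is itself bounded by $\tau^2$ times the squared amplitude of the boundary trace. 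This produces exactly the operator inequality that renders the $(4,4)$ block non-positive on $\mathbb{C}^+$, playing here for the $N+1$ residuals the role that $\delta_0$ played for the single Jensen-type term in Theorem~\ref{sec:theoQS}.

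The main obstacle is the first step: the combinatorics of the Legendre boundary values and derivative recurrences must be tracked carefully in order to match exactly the patterns $\mathbb{1}_{N+1}$, $L_N$, and $\tilde I_N$ of \eqref{eq:lik}, and to confirm that $\mathcal{E}_N$ remains full column rank after augmentation. Once $(\mathcal{E}_N, \mathcal{A}_N)$ are fixed and \eqref{eq:separateur} is verified for $\Theta_N$, feasibility of the LMI \eqref{eq:LMIN} together with Theorem~\ref{sec:theoQS_general} yields well-posedness of the augmented system \eqref{eq:qsframework}, which by the remark following that theorem is equivalent to input/output stability of system \eqref{eq:problem}.
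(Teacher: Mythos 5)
Your proposal follows essentially the same route as the paper: enrich the quadratic-separation signals with Legendre projections of the delayed boundary trace $\theta\mapsto e^{\theta s}KX(s)$, derive the augmented $(\mathcal{E}_N,\mathcal{A}_N,\nabla_N)$ from the boundary values and the derivation rule \eqref{eq:derivationRule} of the shifted Legendre polynomials, validate the new diagonal block of $\Theta_N$ via Bessel's inequality (the paper's Lemma~\ref{sec:lemmaBessel}, giving $\delta_N^*(s)\delta_N(s)\infeq\tau^2$ on $\mathbb{C}^+$), and conclude by Theorem~\ref{sec:theoQS_general}. The only slips are cosmetic: the paper works on $[-\tau,0]$ rather than $[0,1]$, and the fourth block of $\Theta_{N,1}$ is the scalar $-\tau^2 S$ (the corresponding input $sKX$ is scalar while the output $\mathcal{V}_N$ has $N+1$ components), not $-\tau^2 S\, I_{N+1}$.
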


\begin{remark} The case $N=0$ leads to Theorem~\ref{sec:theoQS}. Theorem~\ref{sec:theoQSN} also introduces a hierarchy of stability conditions. In other words, if system \eqref{eq:problem} is proven to be exponentially stable using LMI \eqref{eq:LMIN} for a given $N = N_0$, then for all $N \supeq N_0$, LMI \eqref{eq:LMIN} also assesses the same stability\cite{seuret:hal-01065142}. \end{remark}

Compared to classical stability analysis approaches using the Lyapunov Stability (LS) obtained with the Lyapunov functional of \cite{besselString}, this method also uses an LMI solver. It usually results in LMIs with less decision variables than other techniques, which is critical when handling high dimensional systems. Indeed, the difference between LS and Quadratic Separation (QS) is the number of variables. This is due to the two dimensions state extension in LS. The double state extension leads to an increase of the number of variables and then to a slower computation. Table~\ref{tab:compVar} and Figure~\ref{fig:compVar} show that considering $N = 2$ with LS has more decision variables than considering QS at $N = 5$.

\begin{table}
	\centering
	\begin{tabular}{c|c|c}
		& Lyapunov Functional & Quadratic Separation \\
		\hline
		& $\cfrac{n^2 + n}{2} + 2 (N^2+n) + 5N + Nn + 9$ & $\cfrac{n^2 + n + N^2 + N}{2} + Nn + 3$ \\
		$N = 0$ & $27$ & $13$ \\
		%$N = 1$ & $38$ & $18$ \\
		$N = 2$ & $61$ & $24$ \\
		$N = 5$ & $142$ & $48$		
	\end{tabular}
	\caption{Number of variables for QS and LS for $n = 4$ and an order $N$.}
	\label{tab:compVar}
\end{table}
\begin{figure}
	\centering
	\includegraphics[width=9cm]{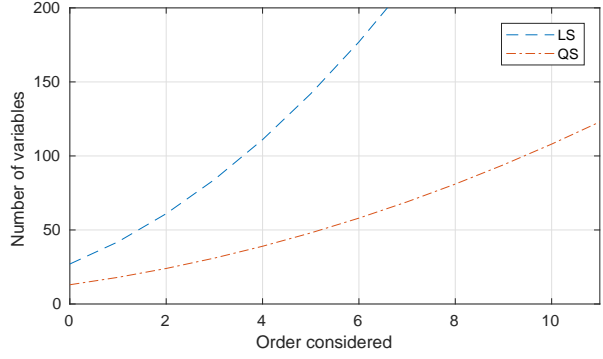}
	\caption{Number of variables for QS and LS for $n = 4$ and an order $N$.}
	\label{fig:compVar}
\end{figure}

\subsubsection{Proof of Theorem \ref{sec:theoQSN}}

The proof of the theorem is divided into 2 parts. First, we will introduce some tools used in the sequel and then the proof itself is derived.

\paragraph{Legendre Polynomials and frequency-bessel inequality}
Before going further, the tools used in the sequel are introduced. For the projections, the scalar product is the canonical inner product of the Hilbert space $L^2([-\tau, 0], \mathbb{C})$ and is denoted $\langle f, g \rangle = \int_{-\tau}^0 f^*(\theta) g(\theta) d\theta$ for $f, g \in L^2([-\tau, 0], \mathbb{C})$. $\| \cdot \|_2$ is the norm given by the previous inner-product. The shifted Legendre polynomials are defined as follows:
\begin{equation}
	\mathcal{L}_k(u) = (-1)^k \sum_{l = 0}^k (-1)^l \left( \begin{matrix} k \\ l \end{matrix} \right) \left( \begin{matrix} k + l \\ l \end{matrix} \right) \left( \frac{u+\tau}{\tau} \right)^l.
\end{equation}

Legendre polynomials have been chosen because of their interesting properties (see \cite{seuret:hal-01065142}).  First, because the evaluation of each Legendre polynomial at its boundaries $0$ and $-\tau$ is simple. Secondly, as it is a polynomial basis, the following differentiation rule applies for $N \in \mathbb{N}, x \in (-\tau, 0)$:
\begin{equation} \label{eq:derivationRule}
	\left[ \begin{matrix} \dot{\mathcal{L}}_0(x) & \cdots & \dot{\mathcal{L}}_N(x) \end{matrix} \right]^{\top} = L_N \left[ \begin{matrix} \mathcal{L}_0(x) & \cdots & \mathcal{L}_N(x) \end{matrix} \right]^{\top},
\end{equation}
where $L_N$ is given in \eqref{eq:lik}.
Finally, the Legendre polynomials family is orthogonal with respect to the inner product $\langle \cdot, \cdot \rangle$ and it is possible to use Bessel inequality. This inequality is the key part to build a separator and a version adapted to this problem is proposed below:
\begin{lemma} \label{sec:lemmaBessel}
	Let $\tau > 0$ and $N \in \mathbb{N}$, then the following inequality holds:
	\begin{equation} \label{eq:bessel}
		\forall s \in \mathbb{C}^+,  \quad  \delta_N^*(s) \delta_N(s)  \infeq \tau^2 ,
	\end{equation}
	with $\delta_N(s) = \sqrt{\tau}\left[ \begin{matrix} \left\langle e^{\theta s}, \frac{\mathcal{L}_0(\theta)}{\| \mathcal{L}_0 \|_2} \right\rangle & \cdots & \left\langle e^{\theta s}, \frac{\mathcal{L}_{N}(\theta)}{\| \mathcal{L}_{N} \|_2} \right\rangle \end{matrix} \right]^{\top}\!\!$.
\end{lemma}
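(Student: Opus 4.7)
The strategy is to recognise the inequality as an instance of classical Bessel's inequality in $L^2([-\tau,0],\mathbb{C})$ applied to the complex exponential $\theta\mapsto e^{\theta s}$, after normalising the shifted Legendre family. Since $\{\mathcal{L}_k\}_{k\geq 0}$ is orthogonal for the stated inner product, the family $\{\mathcal{L}_k/\|\mathcal{L}_k\|_2\}_{k\geq 0}$ is orthonormal, so Bessel's inequality gives, for every $f\in L^2([-\tau,0],\mathbb{C})$,
\[
\sum_{k=0}^{N}\left|\left\langle f,\tfrac{\mathcal{L}_k}{\|\mathcal{L}_k\|_2}\right\rangle\right|^2 \leq \|f\|_2^2.
\]
The idea is to take $f(\theta)=e^{\theta s}$ and multiply by the prefactor $\tau$ coming from the definition of $\delta_N(s)$, so that $\delta_N^*(s)\delta_N(s)$ equals $\tau$ times the left-hand side above. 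It will then suffice to show $\|e^{\theta s}\|_2^2\leq \tau$ uniformly for $s\in\mathbb{C}^+$.

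First I would write out the squared norm explicitly:
\[
\|e^{\theta s}\|_2^2 = \int_{-\tau}^{0} e^{2\theta\,\mathfrak{R}\mathrm{e}(s)}\,d\theta.
\]
For $\mathfrak{R}\mathrm{e}(s)=0$ this integral is exactly $\tau$; for $\mathfrak{R}\mathrm{e}(s)>0$, a direct evaluation yields $\bigl(1-e^{-2\tau\,\mathfrak{R}\mathrm{e}(s)}\bigr)/\bigl(2\,\mathfrak{R}\mathrm{e}(s)\bigr)$, which is bounded above by $\tau$ using the elementary inequality $1-e^{-x}\leq x$ for $x\geq 0$. Hence $\|e^{\theta s}\|_2^2 \leq \tau$ on all of $\mathbb{C}^+$.

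Combining the two bounds gives the claim:
\[
\delta_N^*(s)\delta_N(s)
= \tau\sum_{k=0}^{N}\left|\left\langle e^{\theta s},\tfrac{\mathcal{L}_k}{\|\mathcal{L}_k\|_2}\right\rangle\right|^2
\leq \tau\,\|e^{\theta s}\|_2^2 \leq \tau^2.
\]
There is no serious obstacle in this proof: the only subtlety is handling the closed right half-plane uniformly, which is settled by the $1-e^{-x}\leq x$ estimate so that the bound $\|e^{\theta s}\|_2^2\leq \tau$ holds with equality at $\mathfrak{R}\mathrm{e}(s)=0$ and strictly otherwise. This is precisely the mechanism that generalises the $N=0$ bound $|\delta_0(s)|\leq\tau$ used in the preliminary quadratic separation result and provides the separator inequality needed for the extended LMI in Theorem~\ref{sec:theoQSN}.
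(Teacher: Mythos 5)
Your proof is correct and follows essentially the same route as the paper: apply Bessel's inequality to $\theta\mapsto e^{\theta s}$ over the orthogonal shifted Legendre family, identify the resulting sum with $\tau^{-1}\delta_N^*(s)\delta_N(s)$, and bound $\|e^{\theta s}\|_2^2$ by $\tau$ on $\mathbb{C}^+$. The only difference is that you spell out the final estimate $\bigl(1-e^{-2\tau\,\mathfrak{R}\mathrm{e}(s)}\bigr)/\bigl(2\,\mathfrak{R}\mathrm{e}(s)\bigr)\leq\tau$ via $1-e^{-x}\leq x$, a detail the paper leaves implicit.
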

\begin{proof} Let $s \in \mathbb{C}^+$, Bessel inequality applied to function $\theta \mapsto \exp(\theta s)$ gives: $
	\sum_{k=0}^{N} \frac{1}{\| \mathcal{L}_k \|_2^2} \left\lvert \left\langle e^{\theta s}, \mathcal{L}_k(\theta) \right\rangle \right\rvert^2 \infeq \| e^{\theta s} \|_2^2$.\\
An identification with $\delta_N$ leads to: $\delta_N^*(s) \delta_N(s) \infeq \tau \left\lvert \int_{-\tau}^0  e^{\theta (s + s^*)} d\theta \right\rvert$.
As $s \in \mathbb{C}^+$, the right hand side is bounded by $\tau^2$ and that ends the proof.
\end{proof}

\paragraph{Quadratic Separation formulation of system \eqref{eq:problem}}
In order to ease the comparison with time-delay systems, we introduce a new variable $\theta$ defined as follows: $\theta = -\tau x = - \frac{x}{c}$.
If the neutral part is not taken into consideration, a new infinite dimensional state can be defined: $\tilde{U}(\theta, s) = U(- c \theta, s) \delta^{-1}(s)$. Using equation \eqref{eq:ux}, its expression is then:
\begin{equation} \label{eq:stateU}
	\tilde{U}(\theta,s) = \frac{e^{\theta s} + \alpha e^{-(\theta+2\tau)s} }{1 + \alpha} KX(s),
\end{equation}
with $t > 0$ and $\theta \in (-\tau, 0)$. There are several starting points to enrich $\Omega$, and the one we choose in this paper is to consider only the projection of the first part of the infinite dimensional state, i.e. $\theta \mapsto e^{s \theta} K X(s)$. We justify our choice in Remark~\ref{sec:otherStateExtension}.

Let $N \supeq 0$ be the number of projections used.
The purpose is now to build $\Omega_N$ and $Z_N$ such that:
\begin{equation} \label{eq:sepQuadN}
	\begin{array}{l}
		\Omega_N(s) = \nabla_N(s) Z_N(s), \\
		\mathcal{E}_N Z_N(s) = \mathcal{A}_N \Omega_N(s), \\
	\end{array}
\end{equation}
with $\mathcal{E}_N$ full column-rank where $\Omega_N$ and $Z_N$ are extended versions of $\Omega$ and $Z$. The projections for $k \in [0, N]$ are introduced as follow:
\begin{equation} \label{eq:projections}
	\begin{array}{rlrl}
		\chi_k(s) &= \left\langle e^{\theta s} , \mathcal{L}_k(\theta) \right\rangle K X(s), \quad \quad & \Chi_N(s) &= \left[ \begin{matrix} \chi_0(s) & \cdots & \chi_{N-1}(s) \end{matrix} \right]^{\top}, \\
		\nu_k(s) &= s \sqrt{2k +1} \chi_k(s), & \Nu_N(s) &= \left[ \begin{matrix} \nu_0(s) & \cdots & \nu_{N}(s) \end{matrix} \right]^{\top}.
	\end{array}
\end{equation}
% Following the differentiation rule reminded below, the dynamic of the extra-states $\Chi_N$ can be easily calculated:
%\begin{equation} \label{eq:derivationRule}
%	\dot{\Chi}_N(t) = L_N \Chi_N(t).
%\end{equation}

The new state in Laplace domain is composed of $X(s)$ and the projections of $\theta \mapsto KX(s)e^{\theta s}$ on the orthogonal basis of Legendre polynomials using the projections defined in equation \eqref{eq:projections}. Noting that $\mathcal{L}_k(-\tau) = (-1)^k$ and $\mathcal{L}_k(0) = 1$, the derivation rule in equation \eqref{eq:derivationRule} and an integration by parts give the following result:
\begin{equation}
	s \chi_k(s) =\left( 1 - (-1)^k e^{-\tau s} - \int_{-\tau}^0 e^{\theta s} \mathcal{L}'_k(\theta) d\theta \right) K X(s) = \left(1 - (-1)^{k} e^{-\tau s}\right) KX(s) - \sum_{i=0}^{k-1} \ell_{ik} \chi_i(s),
\end{equation}
for $k \in [0, N-1]$ and $\ell_{ik}$ as defined in equation \eqref{eq:lik}.
Noting that $|| \mathcal{L}_k ||^2 = \tau (2k +1)^{-1}$, $\Nu_N$ can be expressed as follows: $\Nu_N(s) =  \delta_N(s) sKX(s)$.
The new state vector for the quadratic separation are:
\begin{equation}
	\begin{array}{ll}
		z_N(t) = \left[ \begin{matrix} \dot{X}^{\top}\!(t)\! & \dot{\Chi}^{\top}_N(t)\! & K X(t)\! & K X (t-\tau)\! & K \dot{X}\!(t) \end{matrix} \right]^{\top}\!\!\!, & \quad \quad
		\omega_N(t) = \left[ \begin{matrix} X^{\top}\!(t)\! & \Chi_N^{\top}(t)\! & K X(t-\tau)\! & u(1,t)\! & \Nu^{\top}_{N}(t) \end{matrix} \right]^{\top}\!\!\!.
	\end{array}
\end{equation}

With the previous signals, equalities \eqref{eq:sepQuadN} hold for $\mathcal{A}_N$, $\mathcal{E}_N$ defined in the theorem and $\nabla_N(s) = \text{diag}\left(s^{-1} I_{n+N}, e^{-\tau s}, \delta(s), \delta_N(s) \right)$.
Now, we need to find a separator $\Theta_N$. The state extension is based on projections on an orthogonal basis such that the Bessel inequality holds. The result presented in Lemma \ref{sec:lemmaBessel} guarantees that $\Theta_N$ proposed in the theorem is a solution to LMI \eqref{eq:separateur}.

\begin{remark}\label{sec:remQS} One of the main problem of this method comes from the inclusion of $\delta$ into a disk. This may not be a convenient bound for systems with a high reflexion coefficient $\alpha$. So for $c$ and $c_0$ smaller or $c$ and $c_0$ larger, weaker results are expected. \end{remark}

%Then, we get for all $s \in \mathbb{C}^+$:
%\begin{multline*}
%	\!\!\!\!\!\!\!\! \mathcal{I}_N = \left[ \begin{smallmatrix} I_{n+N+3} \\ \nabla_N(s) \end{smallmatrix} \right]^* \Theta_N \left[ \begin{smallmatrix} I_{n+N+3} \\ \nabla_N(s) \end{smallmatrix} \right] = \text{diag} \left( \theta_{11} + \theta_{21}^{\top} {s^{-1}}^* \right. \\
%	 \hfill+ \theta_{21} s^{-1} + |s|^2 \theta_{31}, \\
%	 \theta_{12} + \left( e^{-\tau s^*} + e^{-\tau s} \right) \theta_{22} + \left|e^{-\tau s}\right|^2 \theta_{32}, \\
%	\theta_{13} + \left( \delta^*(s) + \delta(s) \right) \theta_{23} + \left|\delta(s) \right|^2 \theta_{33}, \\
%	\left. \theta_{14} + \delta_N^*(s) \theta_{24}^{\top} + \delta(s) \theta_{24} + \left|\delta(s) \right|^2 \theta_{34} \right).
%\end{multline*}
%
%Choosing $\theta_{11} = \theta_{31} = 0_{n+N}$, $\theta_{21} = \theta_{21}^{\top} = P \in \mathbb{S}_+^{n+N}$, $\theta_{32} = - \theta_{12} = Q \in \mathbb{R}^+$, $\theta_{13} = R (1 - |\alpha|^2) \gamma^2 \in \mathbb{R}^+$, $\theta_{23} = - \gamma R$, $\theta_{33} = R$, $\theta_{14} = - \tau^2 S$ with $\theta_{34} = S \in \mathbb{R}^+$ and $\theta_{24} = 0$ leads to: $\mathcal{I}_N \infeq 0$ using the previous proposition.
%\end{proof}

\begin{remark}[(Another state extension)] \label{sec:otherStateExtension} We justify the choice of projecting only part of the state $\tilde{U}(\cdot,t)$. First, in equation \eqref{eq:stateU}, the state $\tilde{U}$ at a given $\theta$ and $t$ is made up of two contributions. The first one is a result of a wave going forward $\tilde{U}_1(\theta, s) = e^{s \theta} KX(s)$ and another one going backward $\tilde{U}_2(\theta, s) = e^{-s (\theta+2\tau)} KX(s)$. We decided here to project only $\tilde{U}_1$. Another option is to consider the two components $\tilde{U}_1$ and $\tilde{U}_2$ independently and, then, to enrich the state by two projections at each order. This leads to an increased number of variables and, unfortunately, similar performances.

Indeed, to keep the same number of variables, one solution would be to project the whole state $U(x,s)$ on the same basis of Legendre polynomials. Considering $\theta = -\tau x$, we get the following equations for $\theta \in (-\tau, 0)$:
\begin{equation}
	\tilde{U}(\theta, s) = U(\theta, s) \delta^{-1}(s) = \left( e^{s \theta} + \alpha e^{-s(\theta + 2 \tau)} \right) U(0,s).
\end{equation}

Using the dot product defined earlier, we get:
\begin{equation}
	\frac{\chi_k(s)}{U(0,s)} = \left\langle \frac{\tilde{U}(\theta, s)}{U(0,s)}, \mathcal{L}_k(\theta) \right\rangle = \left\langle e^{s\theta}, \mathcal{L}_k(\theta) \right\rangle
	+ \alpha e^{-s\tau} \int_{-\tau}^0 e^{-s (\theta + \tau)} \mathcal{L}_k(\theta) d\theta
	= \left\langle e^{s \theta}, \mathcal{L}_k(\theta) \right \rangle
	+ \alpha e^{-s \tau} \left\langle e^{s \theta}, \mathcal{L}_k(- \theta - \tau) \right\rangle.
\end{equation}

Noting that the shifted Legendre polynomials are symmetric/antisymmetric relative to $-\frac{\tau}{2}$ which means we have $\mathcal{L}_k(- \theta - \tau) = (-1)^k \mathcal{L}_k(\theta)$ and hence we get:
\begin{equation}
	\chi_k(s) = \left\langle e^{s \theta}, \mathcal{L}_k(\theta) \right \rangle \left( 1 + (-1)^k\alpha e^{-s \tau} \right) U(0,s).
\end{equation}

The problem comes from the Bessel-like inequality in \eqref{eq:bessel} and now reads:
\begin{equation}
	\sum_{k=0}^N (2k+1) \left|\frac{\chi_k(s)}{U(0,s)}\right|^2 \infeq \tau^2 (1 + |\alpha|)^2.
\end{equation}

This inequality is not an optimal bound because it will not become an equality for $N \to \infty$, which was the case previously.\end{remark}

\section{Examples}

In this part, we aim at presenting the difference between the exact stability area and the one obtained using robust stability theorems. The same system has also been studied in \cite{besselString} using a Lyapunov-based stability approach. It also uses a hierarchy of LMI conditions, so the notation ``LS, $N = i$'' refers to the stability obtained using the other methodology for an order $i$. A comparison of efficiency between all the methods is also presented. 

The estimation of the stability area of system \eqref{eq:problem} is provided on some chosen examples with different behaviors. We consider the interconnection of a stable wave equation ($c_0 > 0$) interconnected with different ODE systems. First, the interconnection with a stable finite dimension LTI system is proposed.
Then, an unstable ODE is interconnected. We then aim at proving that there exist unstable systems stabilized thanks to the wave equation.
To finish, we analyze a system known to possess stability pockets for $c c_0 = 1$. The LMI solver used in the examples is ``sdpt3'' with Yalmip\cite{1393890}.

\subsection{$A$ and $A+BK$ Hurwitz with $\|H\|_{\infty} < 1$}

This first example is borrowed from \cite{besselString} where the $K$ matrix has been slightly modified to get $\|H\|_{\infty} < 1$ such that the Small-Gain Theorem can be applied. System \eqref{eq:problem} is proposed with the following matrices:
\begin{equation} \label{eq:simu3}
	\begin{array}{ccc}
		A = \left[ \begin{matrix} -2 & 1 \\ 0 & -1 \end{matrix} \right], &
		B = \left[ \begin{matrix} 1 \\ 1 \end{matrix} \right], &
		K = \left[ \begin{matrix} 0 & -\frac{20}{21} \end{matrix} \right].
	\end{array}
\end{equation}

It is easy to verify that $A$ and $A+BK$ are indeed Hurwitz. Moreover, the infinity norm of the open-loop system is less than 1, then Theorem \ref{smallGain} applies and $\lim_{c_0 \to \infty} c_{min}(c_0) = 0$. Here we are after computing the $c_{min}$, which is the smallest $c$ such that for all $c > c_{min}$, the system is input/output stable. While for the Small-Gain theorem, we have proven this bound, the remaining approaches will reveal the bounds through computations.

The results with quadratic separation (QS) and the method with Lyapunov-based stability (LS) at order $N = 0$ developed in \cite{besselString} are superposed in the chart of Figure \ref{fig:simu3}. As expected, the small gain theorem provides the worst estimation of the stability area but with stronger properties. The QS and LS approaches provide similar results. The QS does not use an extended state so the results for high $c_0$ are further from the CTCR curve than the one obtained with LS.\\ Finally, the result obtained with CTCR shows a non-continuous behavior for small $c c_0$ and this observation discourages us to use the $(c_0, c)$ chart to estimate the stability area of system \eqref{eq:problem}. This is not a numerical issue, but rather indicates some singularities regarding system poles crossing the imaginary axis for certain values of $c_0$. This can be remedied by viewing the stability on a different domain of parameters. Indeed, an appropriate choice of system coordinates is $(c c_0, c^{-1})$. In order to make a comparison with time-delay systems, the delay $c^{-1}$ needs to be considered as one of the axes. It is natural to add $c c_0$ as the other variable of interest as it pilots the behavior of the system. Then, on the line $c c_0 = 1$, we get a time-delay system (see the state-space representation \eqref{eq:ss} and Remark~3), and it is possible to compare the results with the literature. From this point, the mapping $(c c_0, c^{-1})$ is used.

%The notation $c_{min}$ is used in the chart, meaning that for all $c > c_{min}$, the system is also stable. This property was proven only for the Small-Gain theorem.

\begin{figure}
	\centering
	\includegraphics[width=13cm]{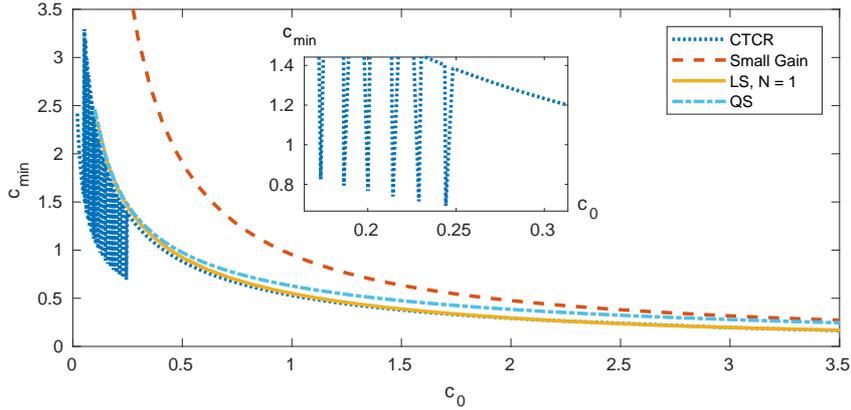}
	\vspace{-0.2cm}
	\caption{Minimum wave speed $c_{min}$ as a function of $c_0$ for system \eqref{eq:problem} to be stable. The values for $A$, $B$ and $K$ are given by equation \eqref{eq:simu3}.}
	\label{fig:simu3}
\end{figure}

\subsection{$A$ and $A+BK$ are not Hurwitz}

The following system is borrowed from \cite{6760001} where 
\begin{equation} \label{eq:simu4}
	\begin{array}{ccc}
		A = \left[ \begin{matrix} 0 & 1 \\ -2 & 0.1 \end{matrix} \right], &
		B = \left[ \begin{matrix} 0 \\ 1 \end{matrix} \right], &
		K = \left[ \begin{matrix} 1 & 0 \end{matrix} \right].
	\end{array}
\end{equation}

It has been shown in the cited study that for $c_1= c c_0 = 1$, it is stable for a sufficiently large delay. This is a very different theoretical case compared to the previous one. For $\tau = 0$, uncontrolled system has two unstable poles, which means that a simple output feedback cannot stabilize the system. This is evident from both $A$ and $A+BK$ being unstable. Notice first that this system for $c c_0 = 1$ has been studied in \cite{6760001}, and CTCR recovers the same exact stability area.
%In time-delay systems, this property implies that for the order $N = 0$, there is no value of $c_1$ which stabilize the systems.

QS at $N = 0$ does not provide meaningful results, but, as we can see in Figure \ref{fig:compBesselCTCR2}, higher orders of QS detect a stability area which approaches the exact results by CTCR as the order increases; see the stability regions around $c_1 = 1$. Despite the state augmentation, it is not possible to recover the stability area and computations show that for $c c_0 < 0.5$, QS at any order does not assess stability. This can be a consequence of considering  an inappropriate bound on $\delta$.

%It seems then that QS allows to detect stability pockets. To compare LS and QS, another example with more stability pockets is studied.

\begin{figure}
	\centering
	\includegraphics[width=10cm]{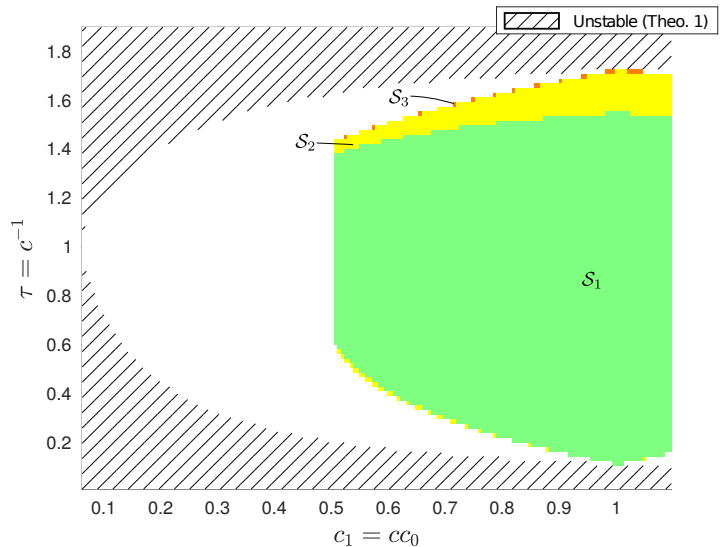}
	\caption{Stability areas for system \eqref{eq:problem} with matrices defined in \eqref{eq:simu4} and $c_1 = c c_0$ and $\tau = c^{-1}$ obtained using CTCR and Theorem \ref{sec:theoQSN}. The hatched area is the exact unstable area, the remaining area is the exact stable area as detected by CTCR. The colored areas are stated as stable for Theorem \ref{sec:theoQSN} at an order $N = 3$. The color scale represents the stability area depending on the order $N$ considered. The areas marked as $\mathcal{S}_i$ is the stable area up to an order $i$.}
	\label{fig:compBesselCTCR2}
\end{figure}

\subsection{An example with stability pockets}

For this last example, the system is taken from \cite{gu2003stability}. It is known to possess multiple stable intervals (pockets) along the delay axis for $c_1 = c c_0 = 1$. We investigate whether or not QS and the methodology of \cite{besselString} can detect these pockets. The system matrices are given by:
\begin{equation} \label{eq:simu5}
	A = \left[ \begin{matrix} 0 & 0 & 1 & 0 \\ 0 & 0 & 0 & 1 \\ -11 & 10 & 0 & 0 \\ 5 & -15 & 0 & -0.25 \end{matrix} \right], \ B = \left[ \begin{matrix} 0 \\ 0 \\ 1 \\ 0 \end{matrix} \right],  \ K = \left[ \begin{matrix} 1 \\ 0 \\ 0 \\ 0 \end{matrix} \right]^{\top} \!\!\!\!.
\end{equation}

We can compare the efficiency of LS in Figure~\ref{fig:compBesselCTCR}. The hierarchy property can be seen and the stability pockets are indeed recovered as the order increases. Figure \ref{fig:compQSCTCR} shows the stability result with QS at an order $N$. QS and LS at the same order provide similar results but with lower decision variables for QS, see Table~\ref{tab:compVar} and Figure~\ref{fig:compVar}.

Moreover, for small $c_1$, the reflection coefficient is not close to $0$ and then, as noted in Remark \ref{sec:remQS}, QS should have a worse estimation. That is why we get poor result on the left side compared to LS. But if $c_1$ is close to $1$, results similar to LS are obtained using QS with a subsequently lower number of decision variables.
That means at the same number of decision variables, QS can detect more stability pockets around $c_1 = 1$ but LS detects a wider stability area. When $4 \leq N \leq 7$, the stability area detected by QS is included in the one ensured by LS  at the price of more decision variables. Not matter the order, QS never detects the whole stability area.

\begin{figure*}
	\centering
	\subfloat[Theorem 2 from \cite{besselString}: Lyapunov functional]{\includegraphics[width=12cm]{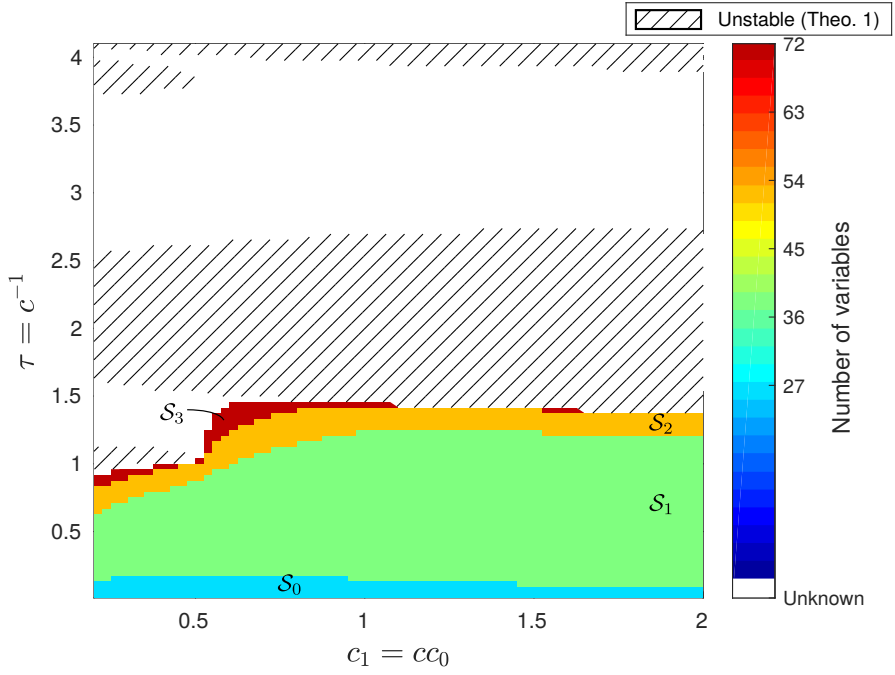} \label{fig:compBesselCTCR}} \\
	\subfloat[Theorem 4: Quadratic Separation]{\includegraphics[width=12cm]{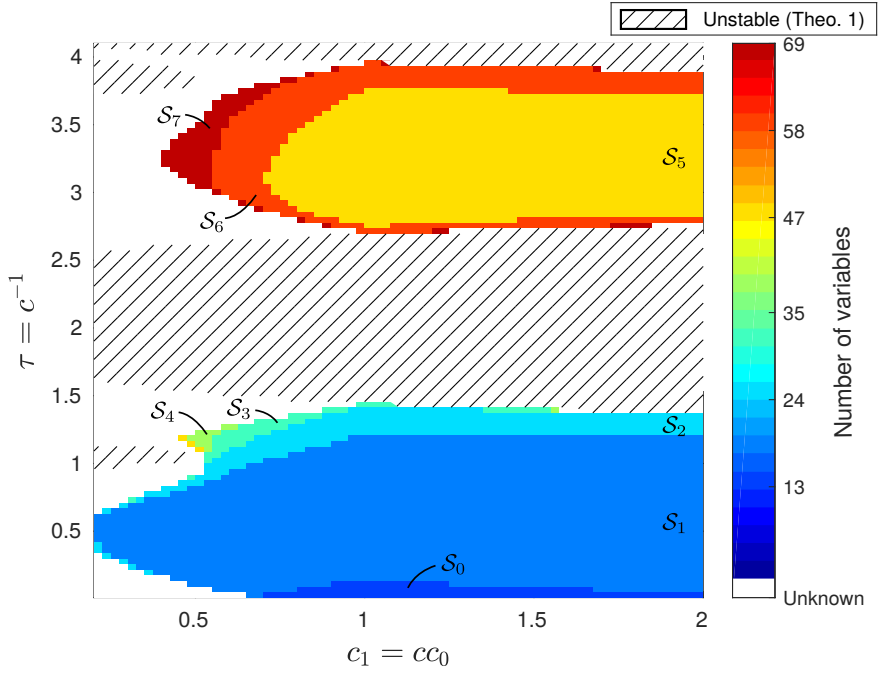} \label{fig:compQSCTCR}}
	\caption{Stability areas for system \eqref{eq:problem} with matrices defined in \eqref{eq:simu5} and $c_1 = c c_0$ and $\tau = c^{-1}$ obtained using CTCR, LS and QS. The hatched area is the exact unstable area, the white area corresponds to the unknown area according to the corresponding theorems but known to be stable with CTCR. The color scale represents the stability area depending on the number of variables considered. To enable comparison, $N$ varies between $0$ and $3$ for LS and between $0$ and $7$ for QS. The areas marked as $\mathcal{S}_i$ is the stable area up to an order $i$.}
\end{figure*}

\section{Conclusion}

In this paper, we studied using an exact method the input/output stability in a parameter region of interest for a system interconnecting an ODE with a string-type PDE equation. We also proposed two robust stability results: a simple one obtained with the Small-Gain theorem and another one using Quadratic Separation. The latter is based on a hierarchy of LMI conditions and is more conservative than some other approaches; but it proposes a subsequently reduced number of decision variables. A perspective would be to enhance the stability results for Quadratic Separation while keeping its low computational burden. Future work will be aimed at determining which approximation introduces the most conservatism in order to reduce the gap between the Lyapunov Stability analysis and the one made with Quadratic Separation.

%\nocite{*}% Show all bib entries - both cited and uncited; comment this line to view only cited bib entries;
\bibliographystyle{plain}
\bibliography{report_draft}%

\begin{thebibliography}{10}

\bibitem{Arcak2016}
M~Arcak, C~Meissen, and A~Packard.
\newblock {\em Stability of Interconnected Systems}.
\newblock Springer, 2016.

\bibitem{6760001}
Y~Ariba, F~Gouaisbaut, A~Seuret, and D~Peaucelle.
\newblock {Stability analysis of time-delay systems via Bessel inequality: A
  quadratic separation approach}.
\newblock {\em {International Journal of Robust and Nonlinear Control}},
  28(5):pp~1507--1527, March 2018.

\bibitem{besselString}
M~Barreau, A~Seuret, F~Gouaisbaut, and L~Baudouin.
\newblock Lyapunov stability analysis of a string equation coupled with an
  ordinary differential system.
\newblock {\em IEEE Transactions on Automatic Control}, 1, 2018,
  DOI:10.1109/TAC.2018.2802495.

\bibitem{bastin2016stability}
G.~Bastin and J.-M. Coron.
\newblock {\em Stability and boundary stabilization of 1-d hyperbolic systems},
  volume~88.
\newblock Springer, 2016.

\bibitem{bellman1963differential}
R~E Bellman and K~L Cooke.
\newblock {\em Differential-difference equations}.
\newblock Academic Press, 1963.

\bibitem{beretta2002geometric}
E~Beretta and Y~Kuang.
\newblock Geometric stability switch criteria in delay differential systems
  with delay dependent parameters.
\newblock {\em SIAM Journal on Mathematical Analysis}, 33(5):1144--1165, 2002.

\bibitem{bresch2014output}
D~Bresch-Pietri and M~Krstic.
\newblock Output-feedback adaptive control of a wave {PDE} with boundary
  anti-damping.
\newblock {\em Automatica}, 50(5):1407--1415, 2014.

\bibitem{briat2011convergence}
C~Briat.
\newblock Convergence and equivalence results for the jensen's inequality -
  application to time-delay and sampled-data systems.
\newblock {\em IEEE Transactions on Automatic Control}, 56(7):1660--1665, 2011.

\bibitem{califano2017stability}
F~Califano, A~Macchelli, and C~Melchiorri.
\newblock Stability analysis of repetitive control: The port-hamiltonian
  approach.
\newblock (1):1894--1899, 2017.

\bibitem{courant1966courant}
R~Courant and D~Hilbert.
\newblock {\em Methods of mathematical physics}, volume~1 of {\em Wiley
  Classics Library}.
\newblock John Wiley \& Sons, Inc, New York, 1989.

\bibitem{curtain1995introduction}
R~F Curtain and H~J Zwart.
\newblock {\em An introduction to infinite-dimensional linear systems theory},
  volume~21 of {\em Texts in Applied Mathematics}.
\newblock Springer-Verlag, New York, 1995.

\bibitem{doetsch2012introduction}
G~Doetsch.
\newblock {\em Introduction to the theory and application of the Laplace
  transformation}.
\newblock Springer, 2012.

\bibitem{espitia2016event}
N~Espitia, A~Girard, N~Marchand, and C~Prieur.
\newblock Event-based control of linear hyperbolic systems of conservation
  laws.
\newblock {\em Automatica}, 70:275--287, 2016.

\bibitem{evans2010partial}
L~C Evans.
\newblock {\em Partial Differential Equations}.
\newblock Graduate studies in mathematics~. American Mathematical Society,
  2010.

\bibitem{flores2014dynamics}
G~Flores.
\newblock Dynamics of a damped wave equation arising from mems.
\newblock {\em SIAM Journal on Applied Mathematics}, 74(4):1025--1035, 2014.

\bibitem{Grabowski2001}
P~Grabowski and F~M Callier.
\newblock Boundary control systems in factor form: Transfer functions and
  input-output maps.
\newblock {\em Integral Equations and Operator Theory}, 41(1):1--37, Mar 2001.

\bibitem{gu2003stability}
K~Gu, J~Chen, and V~L Kharitonov.
\newblock {\em Stability of time-delay systems}.
\newblock Springer, 2003.

\bibitem{4099496}
B~Z Guo and C~Z Xu.
\newblock The stabilization of a one-dimensional wave equation by boundary
  feedback with noncollocated observation.
\newblock {\em IEEE Transactions on Automatic Control}, 52(2):371--377, Feb
  2007.

\bibitem{opac-b1084291}
J~K Hale and S~M~V Lunel.
\newblock {\em Introduction to functional differential equations}.
\newblock Applied mathematical sciences~. Springer-Verlag, New York, Berlin,
  Heidelberg, 1977.

\bibitem{hale1977theory}
{Hale, J~ K}.
\newblock {\em Theory of functional differential equations}.
\newblock Number vol~~3 in Applied Mathematical Sciences Series~. Springer
  Verlag, 1977.

\bibitem{HE2016146}
W~He and S~S Ge.
\newblock Cooperative control of a nonuniform gantry crane with constrained
  tension.
\newblock {\em Automatica}, 66:146 -- 154, 2016.

\bibitem{7476820}
W~He and S~Zhang.
\newblock Control design for nonlinear flexible wings of a robotic aircraft.
\newblock {\em IEEE Transactions on Control Systems Technology},
  25(1):351--357, Jan 2017.

\bibitem{6651788}
W~He, S~Zhang, and S~S Ge.
\newblock Adaptive control of a flexible crane system with the boundary output
  constraint.
\newblock {\em IEEE Transactions on Industrial Electronics}, 61(8):4126--4133,
  Aug 2014.

\bibitem{Wu20142787}
W~Huai-Ning and W~Jun-Wei.
\newblock Static output feedback control via {PDE} boundary and {ODE}
  measurements in linear cascaded {ODE}-beam systems.
\newblock {\em Automatica}, 50(11):2787 -- 2798, 2014.

\bibitem{668829}
T~Iwasaki and S~Hara.
\newblock Well-posedness of feedback systems: insights into exact robustness
  analysis and approximate computations.
\newblock {\em IEEE Transactions on Automatic Control}, 43(5):619--630, May
  1998.

\bibitem{jin2018stability}
C~Jin, K~Gu, S.~I Niculescu, and I~Boussaada.
\newblock Stability analysis of systems with delay-dependent coefficients.
\newblock {\em Phd Thesis, IEEE Access}, 2018.

\bibitem{kolmanovskii2013introduction}
V~Kolmanovskii and A~Myshkis.
\newblock {\em Introduction to the theory and applications of functional
  differential equations}, volume 463.
\newblock Springer, 2013.

\bibitem{krstic2011}
M~Krstic.
\newblock Dead-time compensation for wave/string pdes.
\newblock 133:4458--4463, 12 2009.

\bibitem{krstic2009delay}
M~Krstic.
\newblock {\em Delay compensation for nonlinear, adaptive, and {PDE} systems}.
\newblock Springer, 2009.

\bibitem{Lagnese1983163}
J~Lagnese.
\newblock Decay of solutions of wave equations in a bounded region with
  boundary dissipation.
\newblock {\em Journal of Differential Equations}, 50(2):163 -- 182, 1983.

\bibitem{1393890}
J~L{\"o}fberg.
\newblock Yalmip : a toolbox for modeling and optimization in matlab.
\newblock {\em Computer Aided Control Systems Design, 2004 IEEE International
  Symposium}, pages 284--289, Sept 2004.

\bibitem{louw2012forced}
T~Louw, S~Whitney, A~Subramanian, and H~Viljoen.
\newblock Forced wave motion with internal and boundary damping.
\newblock {\em Journal of applied physics}, 111:14702--147028, 01 2012.

\bibitem{luo2012stability}
Z~H Luo, B~Z Guo, and {\"O}~Morg{\"u}l.
\newblock {\em Stability and stabilization of infinite dimensional systems with
  applications}.
\newblock Springer, 2012.

\bibitem{RNC:RNC1611}
T~Meurer and A~Kugi.
\newblock Tracking control design for a wave equation with dynamic boundary
  conditions modeling a piezoelectric stack actuator.
\newblock {\em International Journal of Robust and Nonlinear Control},
  21(5):542--562, 2011.

\bibitem{morgul1994dynamic}
{\"O}~Morg{\"u}l.
\newblock A dynamic control law for the wave equation.
\newblock {\em Automatica}, 30(11):1785--1792, 1994.

\bibitem{morgul1998stabilization}
{\"O}~Morg{\"u}l.
\newblock Stabilization and disturbance rejection for the wave equation.
\newblock {\em IEEE Transactions on Automatic Control}, 43(1):89--95, 1998.

\bibitem{Morgül2002731}
{\"O}~Morg{\"u}l.
\newblock An exponential stability result for the wave equation.
\newblock {\em Automatica}, 38(4):731 -- 735, 2002.

\bibitem{niculescu2001delay}
S~I Niculescu.
\newblock {\em Delay effects on stability: a robust control approach}, volume
  269.
\newblock Springer, 2001.

\bibitem{olgac2002exact}
N~Olgac and R~Sipahi.
\newblock An exact method for the stability analysis of time-delayed linear
  time-invariant ({LTI}) systems.
\newblock {\em IEEE Transactions on Automatic Control}, 47(5):793--797, 2002.

\bibitem{olgac2004practical}
N~Olgac and R~Sipahi.
\newblock A practical method for analyzing the stability of neutral type
  {LTI}-time delayed systems.
\newblock {\em Automatica}, 40(5):847--853, 2004.

\bibitem{peaucelle2007quadratic}
D~Peaucelle, D~Arzelier, D~Henrion, and F~Gouaisbaut.
\newblock Quadratic separation for feedback connection of an uncertain matrix
  and an implicit linear transformation.
\newblock {\em Automatica}, 43(5):795--804, 2007.

\bibitem{rekasius}
Z.~V Rekasius.
\newblock A stability test for systems with delays.
\newblock {\em Proceedings of the joint automatic control conference}, TP9-A,
  1980.

\bibitem{seuret:hal-01065142}
A~Seuret and F~Gouaisbaut.
\newblock Hierarchy of {LMI} conditions for the stability analysis of time
  delay systems.
\newblock {\em Systems and Control Letters}, 81:1--7, July 2015.

\bibitem{sipahi2011stability}
R~Sipahi, S~I Niculescu, C~T Abdallah, W~Michiels, and K~Gu.
\newblock Stability and stabilization of systems with time delay.
\newblock {\em IEEE Control Systems}, 31(1):38--65, 2011.

\bibitem{sipahi2003degenerate}
R~Sipahi and N~Olgac.
\newblock Degenerate cases in using the direct method.
\newblock {\em ASME 2003 International Design Engineering Technical Conferences
  and Computers and Information in Engineering Conference}, pages 2201--2210,
  2003.

\bibitem{tang2011state}
S~Tang and C~Xie.
\newblock State and output feedback boundary control for a coupled {PDE}--{ODE}
  system.
\newblock {\em Systems \& Control Letters}, 60(8):540--545, 2011.

\bibitem{tucsnak2009observation}
M~Tucsnak and G~Weiss.
\newblock {\em Observation and control for operator semigroups}.
\newblock Springer, 2009.

\bibitem{van2014port}
A~van~der Schaft and D~Jeltsema.
\newblock Port-hamiltonian systems theory: An introductory overview.
\newblock {\em Foundations and Trends in Systems and Control}, 1(2-3):173--378,
  2014.

\bibitem{7984225}
J~Wo\'zniak and B~Niesterowicz.
\newblock Input-output stability analysis of a slowly rotating rotor with
  friction on one end.
\newblock {\em 25th Mediterranean Conference on Control and Automation (MED)},
  (1):847--851, July 2017.

\end{thebibliography}
%\clearpage
%
%\section*{Author Biography}
%
%\begin{biography}{\includegraphics[width=66pt,height=86pt,draft]{empty}}{\textbf{Author Name.} This is sample author biography text this is sample author biography text this is sample author biography text this is sample author biography text this is sample author biography text this is sample author biography text this is sample author biography text this is sample author biography text this is sample author biography text this is sample author biography text this is sample author biography text this is sample author biography text this is sample author biography text this is sample author biography text this is sample author biography text this is sample author biography text this is sample author biography text this is sample author biography text this is sample author biography text this is sample author biography text this is sample author biography text.}
%\end{biography}

\end{document}